   \def\MR#1{}
\theoremstyle{plain}
\newtheorem{theorem}{Theorem}[section]
\newtheorem*{theorem*}{Theorem}
\newtheorem{proposition}[theorem]{Proposition}
\newtheorem{lemma}[theorem]{Lemma}
\newtheorem{corollary}[theorem]{Corollary}
\newtheorem*{namedtheorem}{\theoremname}
\newcommand{\theoremname}{testing}
\theoremstyle{definition}
\newtheorem{definition}[theorem]{Definition}
\newtheorem{question}[theorem]{Question}
\newtheorem*{fact*}{Fact}
\newcommand{\QQ}{\mathbb{Q}}
\newcommand{\bdy}{\partial}
\newcommand{\vol}{{\rm{vol}}}
\newcommand{\tw}{{\rm{tw}}}
\newcommand{\calL}{\mathcal{L}}
\newcommand{\calD}{\mathcal{D}}
\begin{document}

\title{Independence of volume and genus $g$ bridge numbers} 

\author{Jessica S.\ Purcell}
\author{Alexander Zupan}


\subjclass[2010]{Primary 57M25, 57M27, 57M50}




\begin{abstract}
A theorem of Jorgensen and Thurston implies that the volume of a hyperbolic 3--manifold is bounded below by a linear function of its Heegaard genus.  Heegaard surfaces and bridge surfaces often exhibit similar topological behavior; thus it is natural to extend this comparison to ask whether a $(g,b)$-bridge surface for a knot $K$ in $S^3$ carries any geometric information related to the knot exterior.  In this paper, we show that --- unlike in the case of Heegaard splittings --- hyperbolic volume and genus $g$ bridge numbers are completely independent.  That is, for any $g$, we construct explicit sequences of knots with bounded volume and unbounded genus $g$ bridge number, and explicit sequences of knots with bounded genus $g$ bridge number and unbounded volume. 
\end{abstract}

\maketitle

\section{introduction}\label{introduction}

A major theme in 3-manifold research is to connect the geometric and topological invariants of a hyperbolic 3-manifold $Y$.  One archetypal example of such a connection is a celebrated theorem of Jorgensen and Thurston, which implies that the hyperbolic volume of $Y$ is linearly related to a topological invariant involving triangulations of $Y$ (see~\cite{KR}, for example).  Another prominent topological invariant of a closed 3-manifold $Y$ is its Heegaard genus $g(Y)$, the smallest $g$ such that $Y$ admits a genus $g$ surface cutting it into two handlebodies, called a \emph{Heegaard surface}.  From the result of Jorgensen and Thurston, it follows that there is a constant $C$ such that for all hyperbolic manifolds $Y$,
\begin{equation}\label{thurston}
C \cdot g(Y) \leq \vol(Y).
\end{equation}
On the other hand, Heegaard genus is not linearly related to volume; for any fixed genus $g \geq 2$, there is a 3-manifold with Heegaard genus $g$ and arbitrarily large volume~\cite{NS}.

A bridge surface for a knot $K$ in a 3-manifold $Y$ may be viewed as a relative Heegaard surface:  A $(g,b)$-bridge surface $\Sigma$ is a genus $g$ Heegaard surface for $Y$ that cuts $K$ into two collections of $b$ unknotted arcs.  When $Y = S^3$, this definition gives rise to a knot invariant for each $g$, the genus $g$ bridge number $b_g(K)$, i.e.\ the smallest $b$ such that $(S^3,K)$ admits a $(g,b)$-bridge surface.  Genus $g$ bridge numbers are related for various values of $g$, and for this reason they may be collated into a sequence called the \emph{bridge spectrum} $\mathbf{b}(K) = \{b_0(K),b_1(K),\dots \}$~\cite{ZBridge}.  In this context, the classical bridge number of $K$ is $b_0(K)$.

Significant topological evidence supports the claim that Heegaard surfaces and bridge surfaces exhibit similar behavior, and most of the technology developed to better understand Heegaard surfaces can be usefully adapted to the context of bridge surfaces. For example, a notion of high distance for a Heegaard splitting will imply that a manifold is hyperbolic~\cite{Hempel}, and similarly high distance for a bridge splitting also implies hyperbolicity~\cite{BachmanSchleimer}. Existence of high distance Heegaard surfaces and bridge surfaces put restrictions on additional Heegaard and bridge surfaces, respectively~\cite{ScharlemannTomova}, \cite{Tomova}.

In terms of volumes, Zupan demonstrated a connection between the topology of bridge spheres and the volume of 2-bridge knots~\cite{ZPants}. And as noted in Doll~\cite{doll}, in addition to defining genus $g$ bridge numbers $b_g(K)$, which involve fixing $g$ and minimizing $b$, there is a notion of the \emph{$b$-bridge genus} $g_b(K)$ of $K$, which (for fixed $b$) is the smallest $g$ such that a knot $K$ admits a $(g,b)$-splitting.  For any $b$ and any knot $K$, the $b$-bridge genus is bounded above by the Heegaard genus of the exterior $E(K) = S^3-K$ of $K$; thus, the inequality (\ref{thurston}) implies that
\begin{equation}
C \cdot g_b(K) \leq \vol(K).
\end{equation}

Given the information contained in the inequalities (1) and (2) combined with the topological similarities between Heegaard surfaces and bridge surfaces, it is natural to ask the following question:

\begin{question}\label{mainQ}
For the collection of hyperbolic knots $K$ in $S^3$, what is the relationship (if any) between genus $g$ bridge numbers and hyperbolic volumes?
\end{question}

One might expect to find that the genus $g$ bridge number of a knot $K \subset S^3$ yields a lower bound for $\vol(K)$, but this is not the case. In this paper, we show the following. 

\begin{theorem}\label{thm:BoundedVolUnboundedBridge}
For any $g$, there exists a sequence of knots $\{ K_n \}$ and a constant $V$ with $b_g(K_n) \to \infty$ as $n\to \infty$ but $\vol(K_n) < V$ for all $n$.
\end{theorem}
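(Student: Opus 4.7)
\medskip

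\noindent\textbf{Proof plan.}  The plan is to build $\{K_n\}$ by \emph{twisting} a single seed knot $K_0$ along an unknotted curve disjoint from it. Concretely, fix a knot $K_0 \subset S^3$ together with an unknot $c \subset S^3 \setminus K_0$ bounding a disk $D$ that meets $K_0$ in at least two points, and arrange that the link $L = K_0 \cup c$ has hyperbolic complement.  Let $K_n$ be the knot obtained from $K_0$ by performing $n$ full twists along $D$; equivalently, $S^3 \setminus K_n$ is the $1/n$--Dehn filling of the cusp of $S^3 \setminus L$ corresponding to $c$.  Thurston's hyperbolic Dehn surgery theorem then gives $\vol(K_n) < \vol(S^3 \setminus L) =: V$ for all sufficiently large $n$, so the volumes are uniformly bounded.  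Assuming the existence of $K_0$ and $c$ with the extra distance property described below, the volume half of the theorem is immediate.

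\medskip

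The substantive content is to force $b_g(K_n) \to \infty$.  I would choose $(K_0, c)$ so that some $(g,b_0)$--bridge surface $\Sigma_0$ for $K_0$ has \emph{very large distance} with respect to the pair of meridional disks of $c$ in $S^3 \setminus K_0$, in the sense of Bachman--Schleimer.  High--distance $(g,b)$--bridge surfaces of this sort exist by adapting the usual train-track / pseudo-Anosov constructions of Hempel and Johnson for Heegaard splittings, and one can check that having a sufficiently high-distance $\Sigma_0$ is compatible with having a second cusp carried by an unknotted curve bounding a twice--punctured disk.  Having fixed this setup, I would then invoke a comparison theorem of Tomova (as generalized to bridge surfaces by Bachman--Schleimer and Tomova): if $K_n$ admits a $(g,b)$--bridge surface $\Sigma'$, then either $\Sigma'$ is isotopic (in the link exterior of $L$) to $\Sigma_0$, or the distance of $\Sigma_0$ is bounded by a linear function of the Euler characteristic of $\Sigma'$, i.e.\ by $2g + 2b - 2$.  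Choosing the distance of $\Sigma_0$ to exceed this quantity rules out the second alternative.

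\medskip

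Once $\Sigma'$ is forced to coincide (in $S^3 \setminus L$) with $\Sigma_0$, bridging $K_n$ by $\Sigma'$ amounts to re-reading the original bridge surface $\Sigma_0$ after $n$ twists along $D$: the bridge arcs of $K_0$ that cross $D$ pick up $n$ additional passes through $\Sigma'$ each, and counting intersections of $K_n$ with $\Sigma'$ gives $b \geq b_0 + (\text{const})\cdot n$.  Hence $b_g(K_n) \to \infty$ as $n\to\infty$, while $\vol(K_n) < V$, proving the theorem.

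\medskip

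\noindent\textbf{Main obstacle.}  The hard step is the rigidity statement in the middle paragraph: a $(g,b)$--bridge surface for $K_n$ of small $b$ must essentially be the image of $\Sigma_0$ after twisting, rather than some entirely new surface that cleverly exploits the twist region.  This is where the high--distance hypothesis on $\Sigma_0$ does all of the work, via a sweepout / thin-position argument comparing $\Sigma'$ to $\Sigma_0$ in the complement of $L$.  Once that rigidity is available, the volume bound is a straightforward appeal to Thurston and the bridge-number growth is a bookkeeping count of intersections with the twisting disk $D$.
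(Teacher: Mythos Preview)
Your overall architecture---produce $\{K_n\}$ by $1/n$-filling a hyperbolic two-component link and bound volume by the parent---matches the paper.  The gap is entirely in the bridge-number half, and it is a real one.

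The rigidity step does not work as written.  Tomova's theorem compares two bridge surfaces for the \emph{same} pair $(S^3,K)$; your $\Sigma_0$ is a bridge surface for $K_0$ (or for $L$) while $\Sigma'$ is a bridge surface for $K_n$ in the \emph{filled} $S^3$.  To invoke any distance inequality you must first transport $\Sigma'$ back into $E(L)$, but $\Sigma'$ may meet the core of the filling solid torus many times, and drilling produces a surface whose Euler characteristic you have no control over.  This is precisely the well-known difficulty that Dehn filling can create genuinely new low-complexity splittings not visible before filling; a high-distance hypothesis on $\Sigma_0$ in $E(L)$ does not by itself rule them out.

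The final counting step is also broken.  If the disk $D$ can be isotoped into one handlebody $H$ of the splitting $\Sigma_0$, then the $n$-fold twist along $D$ is a self-homeomorphism of $H$ and carries trivial arcs to trivial arcs; hence $\Sigma_0$ remains a $(g,b_0)$-bridge surface for \emph{every} $K_n$ and $b_g(K_n)$ is constant.  So you need an obstruction to absorbing the twisting locus into a genus-$g$ handlebody, and you need it to survive for all $n$.  You have not arranged this.

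The paper handles exactly these two issues, but not via distance.  It twists an unknot $K'$ along an \emph{annulus} $R'$ whose core $K_g'$ is a high-distance plat with tunnel number $g+1$; the tunnel-number bound prevents $R'$ from isotoping into any genus-$g$ Heegaard surface.  It then exhibits a catching surface $Q'$ and proves the three-component link $\mathcal L' = K'\cup L_1'\cup L_2'$ is hyperbolic (ruling out essential annuli).  These are precisely the hypotheses of Theorem~1.2 of Baker--Gordon--Luecke, which then gives the explicit linear bound $b_g(K^n)\geq \tfrac12(n/36 - 2g + 1)$.  That theorem is doing the work your sketch attempts to do with Tomova, and it is not a consequence of a distance argument.
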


Note that the case $g=0$ follows from results on twisted torus knots in~\cite{BowmanTaylorZupan} and~\cite{CKFNP}.

We give a concrete proof of this result, in which we build a sequence of knots and show they satisfy the requirements of the theorem.  The knots are obtained by performing higher and higher annular Dehn fillings on a link $\calL$. The volume is bounded above by the volume of the parent manifold $S^3-\calL$. The genus $g$ bridge number can be bounded below using a theorem of Baker, Gordon, and Luecke \cite[Theorem~1.2]{BakerGordonLuecke2013}.

In contrast to Theorem~\ref{thm:BoundedVolUnboundedBridge}, we also show genus $g$ bridge numbers do not bound volume from above.  This result is not surprising, but we include it here for completeness.

\begin{proposition}\label{thm:UnboundedVolBoundedBridge}
For any $g,b\geq 1$, there exists a sequence of knots $\{K_n\}$ such that $b_g(K_n) =b$ but $\vol(K_n)\to\infty$ as $n\to\infty$.
\end{proposition}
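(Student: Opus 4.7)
The strategy is to exhibit, for each $g,b\geq 1$, a sequence of hyperbolic knots $\{K_n\}$ admitting $(g,b)$-bridge splittings whose bridge distance tends to infinity. High distance will pin the genus-$g$ bridge number at $b$ by rigidity, while simultaneously forcing the hyperbolic volume to diverge.

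For the base case $g=b=1$, let $\{K_n\}$ be a sequence of hyperbolic two-bridge knots whose twist numbers grow --- for instance, two-bridge knots $K(p_n/q_n)$ whose alternating continued fractions have length tending to infinity, so that the standard lower bounds on volume in terms of twist number yield $\vol(K_n)\to\infty$. Every two-bridge knot has tunnel number one and hence admits a $(1,1)$-splitting, so $b_1(K_n)\leq 1$, while $b_1(K_n)\geq 1$ because a hyperbolic knot is not a torus knot. Hence $b_1(K_n)=1=b$.

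For general $g,b\geq 1$, the plan is to produce a sequence $\{K_n\}$ of knots in $S^3$ admitting $(g,b)$-bridge surfaces $\Sigma_n$ of distance $d(\Sigma_n)\to\infty$ --- obtained by iterating a pseudo-Anosov gluing of the bridge surface in a construction analogous to those of~\cite{BachmanSchleimer}, with care taken that the gluings continue to produce $S^3$. By construction, $b_g(K_n)\leq b$. Tomova's rigidity theorem~\cite{Tomova} (together with~\cite{ScharlemannTomova}) implies that once $d(\Sigma_n)$ exceeds a threshold depending only on $g$ and $b$, any bridge surface of $K_n$ of genus $\leq g$ with strictly fewer than $b$ bridges must be a stabilization of $\Sigma_n$ --- impossible since stabilization cannot decrease bridge number --- forcing $b_g(K_n)=b$. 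Finally, a bridge-splitting analog of the Brock-Masur volume/pants-distance inequality, established in the two-bridge case in~\cite{ZPants}, converts $d(\Sigma_n)\to\infty$ into $\vol(K_n)\to\infty$.

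The main obstacle is the volume-from-distance inequality for arbitrary $(g,b)$-bridge splittings: this is explicit in the two-bridge case~\cite{ZPants} and for Heegaard splittings via Brock-Masur, but the general bridge-surface version requires adapting the Brock-Masur framework to the bridge-surface pants graph. An alternative route, avoiding this obstacle, is to give a separate explicit construction in each case $(g,b)$ --- e.g.\ hyperbolic $(g,b)$-bridge knots built from a base knot by introducing increasingly many disjoint augmented twist regions placed inside the handlebodies of a fixed high-distance bridge splitting, so that the $(g,b)$-structure is preserved while the volume, bounded below by the sum of hyperbolic volumes contributed by each twist region, diverges.
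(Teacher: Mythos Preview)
Your base case $g=b=1$ matches the paper's. For the general case, however, your primary route has a genuine gap that you yourself flag: a volume-from-bridge-distance inequality for arbitrary $(g,b)$-bridge surfaces is not in the literature. Brock's work concerns Heegaard splittings of closed manifolds, and \cite{ZPants} treats only the $(0,2)$ case; promoting either to general bridge surfaces would be a project in itself, not a lemma one can cite. There is a second, smaller gap in the same route: arranging a pseudo-Anosov iteration to keep the ambient manifold equal to $S^3$ is not automatic --- the usual high-distance existence arguments do not control the ambient manifold. Your fallback route --- twisting inside the handlebodies of a fixed high-distance splitting --- is too vague to be a proof as stated: twisting a trivial arc in a handlebody can easily produce an isotopic knot, so there is no general principle guaranteeing $\vol\to\infty$; you would need a concrete diagrammatic mechanism and a volume lower bound to make this work.

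The paper avoids both difficulties by using a single explicit family that simultaneously delivers high distance and large volume, with no need to relate the two. Take the Johnson--Moriah highly twisted plats on $2m$ strands with $m=g+b$, let the number of rows $r_n\to\infty$ with $r_n>4m(m-2)$, and choose $|a_{i,j}|\ge 7$. Then (i) the induced bridge sphere has distance at least $2m$ by \cite{JohnsonMoriah}, so Tomova's theorem \cite{Tomova} gives $b_g(K_n)=b$ exactly; and (ii) since each twist region has at least seven crossings, \cite[Theorem~1.2]{FKP:DehnFillingVolJP} bounds $\vol(K_n)$ below by a linear function of $\tw(K_n)$, which tends to infinity with $r_n$. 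This is much closer in spirit to your alternative route than to your primary one, but the point is that the Johnson--Moriah plats already come with both the distance certificate and a twist-number volume bound built in.
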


It follows from the main theorem and proposition that the answer to Question~\ref{mainQ} is that in general, there is no relationship between the hyperbolic volume of a knot complement and its genus $g$ bridge numbers; the two types of invariants are independent measures of complexity.

\subsection{Acknowledgements}
We thank Jesse Johnson and Yoav Moriah for helpful discussions.
Purcell is partially supported by NSF grants DMS--1252687 and DMS-1128155.  Zupan is partially supported by NSF grant DMS--1203988.

\section{Preliminaries}\label{sec:Prelim}

In this section, we set conventions and present definitions that will appear in the remainder of the paper.  We assume that all manifolds are compact and orientable.  Given a knot or link $L$ in $S^3$, let $N(L)$ denote a closed regular neighborhood of $L$, and let $E(L)$, the \emph{exterior} of the link, be defined by $E(L) = \overline{S^3 - N(L)}$.  For any genus $g \geq 0$, a $(g,b)$-bridge splitting of $(S^3,L)$ is a decomposition
\[ (S^3,L) = (V,\alpha) \cup_{\Sigma} (W,\beta),\]
where $V$ and $W$ are genus $g$ handlebodies, $\Sigma = \partial V = \partial W$, and $\alpha$ and $\beta$ are collections of $b$ unknotted arcs in $V$ and $W$, respectively.  The \emph{genus $g$ bridge number} $b_g(K)$ is the smallest $b$ such that $K$ admits a $(g,b)$-bridge splitting.

For a link $L \subset S^3$ and a component $L'$ of $L$, we may perform \emph{Dehn filling} on $L'$ by gluing the boundary of a solid torus $V$ to the boundary of $E(L')$.  Curves in the boundary of $\partial(N(L'))$ are naturally parameterized by the extended rational numbers $\QQ\cup\{\infty\}$, and the number corresponding to the image of a meridian of the solid torus $V$ is called the \emph{slope} of the filling.  In particular, if $L'$ is an unknot which bounds a disk $D$ in $S^3$ and we perform Dehn filling of slope $1/n$ on $E(L')$, the resulting 3-manifold is $S^3$, and the filling has the effect of adding $n$ full twists (or $2n$ half twists) to strands of $L - L'$ that pierce the disk $D$.

Finally, we define a \emph{tunnel system} for a knot $K$ in $S^3$ to be a collection of properly embedded arcs $\Gamma$ in $E(K)$ such that $E(K\cup \Gamma)$ is a handlebody.  The \emph{tunnel number} $t(K)$ is the minimal number of arcs in a tunnel system for $K$.  Note that $t(K) = g(E(K))-1$, where $g(E(K))$ is the Heegaard genus of $K$.

\section{Link descriptions}\label{sec:LinkDesc}

In this section, we find, for every $g$, a concrete, explicit sequence of links with bounded volume but $b_g$ approaching infinity.

Our construction starts with a highly twisted plat projection of a knot as in Figure~\ref{fig:HighlyTwistedPlat}, left, which was defined by Johnson and Moriah~\cite{JohnsonMoriah}. Such a projection may be constructed as follows: Start with a braid with $2m$ strands. Recall that we may write the generators of the braid group on $2m$ strands as $\sigma_1,\dots, \sigma_{2m-1}$, where $\sigma_j$ gives a positive crossing between the $j$-th and $(j+1)$-st strands. We take our braid to be given by the product of $n$ elements $b_1, b_2, \dots, b_n$ in the braid group, where $n$ is odd, of the following form:
\[
b_k = \begin{cases}
  \sigma_2^{a_{k,2}}\sigma_4^{a_{k,4}} \dots \sigma_{2m-2}^{a_{k,2m-2}} & \mbox{if $k$ is odd} \\
  \sigma_1^{a_{k,1}}\sigma_3^{a_{k,3}} \dots \sigma_{2m-1}^{a_{k, 2m-1}} & \mbox{if $k$ is even}
  \end{cases}
\]
Connect the $2m$ strands at the top of the braid by $m$ simple arcs, and similarly at the bottom of the braid. The result is a link diagram with a grid of twist regions. The twist region in the $(i,j)$-th position has $a_{i,j}$ crossings. In order to obtain the desired properties, we make the following further requirements on the diagram.

\begin{figure}
  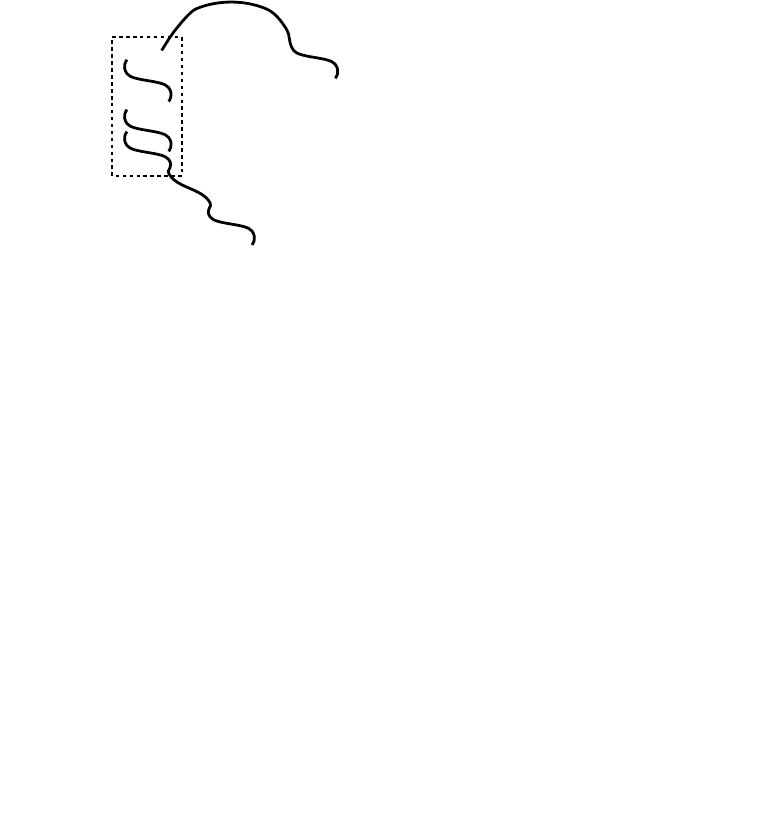
  \hspace{.2in}
  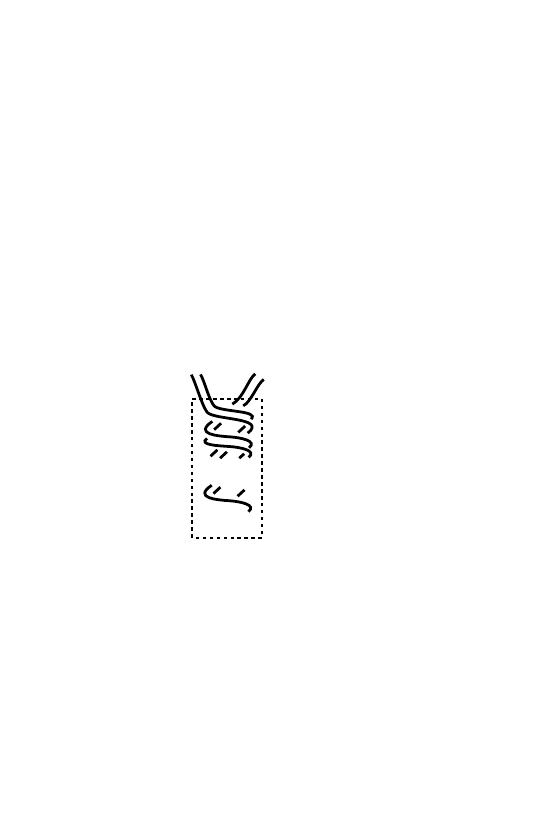
  \caption{Left: A highly twisted plat. Right: The link $\calL'$.}
  \label{fig:HighlyTwistedPlat}
\end{figure}

\begin{definition}\label{def:PlatProjection}
Fix $g \geq 1$. Let $K_g'$ be a knot given by a plat projection as above with $2m$ strands, where $m=g+2$. Let $r$, the number of rows of the projection, be fixed, equal to $4m(m-2)+1 = 4(g+2)g+1$. Finally, select the $a_{i,j}$ to satisfy the following.
\begin{itemize}
\item $|a_{i,j}| \geq 6$. 
\item If $i=1$, then $a_{1,j}$ is odd.
\item If $i>1$, then $a_{i,j}$ is even.
  \end{itemize}
\end{definition}

\begin{lemma}\label{lemma:PlatsAreNice}
The link $K_g'$ has the following properties for any $g\geq 1$. 
\begin{enumerate}
\item $K_g'$ is a knot (i.e.\ a single link component).
\item The bridge number of $K_g'$ is $g+2$, and the bridge sphere is unique up to isotopy (any bridge sphere is isotopic to the horizontal one in the plat projection). 
\item The tunnel number of $K_g'$ is $g+1$.
\item $K_g'$ is hyperbolic.
\end{enumerate}
\end{lemma}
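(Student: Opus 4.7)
My plan is to handle the four assertions in order, with the last three all resting on one common ingredient: a bridge-distance estimate for the horizontal plat sphere $\Sigma_0$ of the projection in Figure~\ref{fig:HighlyTwistedPlat}.

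For assertion (1), I would reduce the count of components to a permutation computation in $S_{2m}$. The permutation $\tau\in S_{2m}$ induced by the braid word depends only on the parities of the exponents $a_{i,j}$, so by the parity hypotheses of Definition~\ref{def:PlatProjection} the rows with $i>1$ contribute trivially and only the first row contributes, giving $\tau=(2\,3)(4\,5)\cdots(2m-2\;2m-1)$. Starting at the top endpoint of strand $1$ and alternately applying $\tau$ (resp.\ $\tau^{-1}$) to move along a braid strand and a plat arc (which swaps $2i-1\leftrightarrow 2i$ at the top and at the bottom), one verifies by direct trace that all $4m$ strand endpoints lie in a single cycle, so $K_g'$ is a knot.

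For (2) and (3), the plan is first to record the upper bounds visible from the projection. The surface $\Sigma_0$ presents $K_g'$ in $m$-bridge position, giving $b_0(K_g')\le m=g+2$, and a standard tunnel construction from an $m$-bridge presentation yields $t(K_g')\le m-1=g+1$. For the matching lower bounds and for uniqueness, I would invoke the Johnson--Moriah distance estimate for highly twisted plats: under the hypotheses $|a_{i,j}|\ge 6$, the distance of $\Sigma_0$ grows linearly in the number of rows $r$, and the choice $r=4m(m-2)+1$ is calibrated so that $d(\Sigma_0)$ exceeds $2m=2-\chi(\Sigma_0)$. A Tomova-type comparison theorem then forces any other bridge surface for $(S^3,K_g')$, and any Heegaard surface for $E(K_g')$, to have strictly greater Euler-characteristic complexity than $\Sigma_0$. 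Combined with the upper bounds, this simultaneously yields $b_0(K_g')=g+2$, $t(K_g')=g+1$, and uniqueness of $\Sigma_0$ up to isotopy. Assertion (4) is then immediate: $d(\Sigma_0)>2m\ge 6$ is well beyond the threshold in the Bachman--Schleimer hyperbolicity criterion cited in the introduction, so $K_g'$ is hyperbolic.

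The main obstacle is the distance bookkeeping in parts (2) and (3). One has to verify that the specific numerology of Definition~\ref{def:PlatProjection} --- the values $|a_{i,j}|\ge 6$ and $r=4m(m-2)+1$ --- is exactly what the Johnson--Moriah bound demands in order to beat the threshold $2m$, and then that a single distance inequality simultaneously rules out smaller bridge spheres, smaller bridge surfaces of positive genus, and smaller Heegaard splittings of $E(K_g')$. Once that inequality is in place, parts (1) and (4) and the upper bounds in (2) and (3) are either combinatorial or routine consequences.
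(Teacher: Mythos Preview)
Your proposal is correct and, for parts (1)--(3), is essentially the paper's own argument: the parity trace for connectedness, the Johnson--Moriah distance bound $d(\Sigma_0)\ge\lceil r/(2(m-2))\rceil>2m$ from the choice $r=4m(m-2)+1$, and Tomova's theorem to obtain both the bridge-number and tunnel-number lower bounds together with uniqueness.

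The one genuine difference is in (4). You deduce hyperbolicity from the distance inequality $d(\Sigma_0)>2m\ge 6$ via the Bachman--Schleimer criterion, reusing the estimate already established for (2) and (3). The paper instead appeals to the geometric result of Futer--Purcell (using only the hypothesis $|a_{i,j}|\ge 6$ and the fact that the plat diagram is prime and twist-reduced), with an alternative route via the $6$-Theorem applied to the augmented-link Dehn filling. Your approach is tidier in that it needs no additional input beyond what (2) and (3) already require; the paper's approach has the minor advantage of being independent of the distance machinery and of giving hyperbolicity directly from the diagrammatic twist hypothesis.
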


\begin{proof}
The fact that $K_g'$ is a knot follows from the fact that there are an odd number of crossings on the top row, and an even number of crossings thereafter. This connects the strands into a single component.

The rest of the items follow from appeals to several references. First, by work of Johnson and Moriah~\cite{JohnsonMoriah}, the distance of the induced bridge sphere $\Sigma$ (see~\cite{JohnsonMoriah} for definitions) is $\lceil r/(2(m-2)) \rceil > 2m$.  Work of Tomova~\cite{Tomova} then implies that $\Sigma$ is the unique minimal bridge sphere up to isotopy.  For the third claim, we first note that by using a generic operation called meridional stabilization on $\Sigma$, we get a genus $m$ Heegaard surface $Q$ for $E(K_g')$.  In fact,~\cite{Tomova} also asserts that if $Q'$ is any Heegaard surface for $E(K_g')$ such that $\chi(Q') \leq 2+2m$, then $Q'$ is related to $\Sigma$ by a sequence of generic operations and thus $g(Q') \geq m$.  It follows that $g(E(K_g')) = m = g+2$, and thus the tunnel number of $K_g'$ is $g+1$.

Finally, the fact that $K_g'$ is hyperbolic follows from \cite[Theorem~1.2]{FuterPurcell}, using the hypothesis that each $|a_{i,j}|\geq 6$. The fact that the plat diagram is prime and twist reduced is straightforward, and we leave its proof to the reader. (Alternatively, $K_g'$ is obtained by Dehn filling a hyperbolic link $L_g$ described in Section~\ref{sec:proof}. In \cite{FuterPurcell}, it is shown that provided $|a_{i,j}|\geq 6$, each slope has length at least six, and so the 6-Theorem implies $K_g'$ is hyperbolic \cite{agol:bounds, lackenby:word}.)
\end{proof}

Let $K_g'$ be a knot as in Definition~\ref{def:PlatProjection}.  We will use $K_g'$ to produce the sequence of knots $\{K_n\}$ with unbounded genus $g$ bridge numbers.  To begin, let $L_1'$ and $L_2'$ be the pushoffs of $K_g'$ which lie flat in the projection plane following $K_g'$, except for two modifications:
\begin{enumerate}
\item In each twist region, the four parallel strands of $L_1\cup L_2$ contain the corresponding number $a_{i,j}$ of half twists.
\item At the bottom of the first column, the two parallel strands of $L_1'$ and $L_2'$ have 14 positive crossings.
\end{enumerate}
A depiction of $L_1'$ and $L_2'$ is shown in Figure~\ref{fig:HighlyTwistedPlat}, right.  Note that $L_1'$ and $L_2'$ cobound an annulus $R'$ containing $K_g'$ as a core.  Take $K'$ to be a simple unknot in the lower left-hand corner of the diagram that meets $R'$ twice and links with each of $L_1'$ and $L_2'$ once, with one intersection of $K'$ and $R'$ separating the 14 crossings into two pairs of 7, as shown in Figure~\ref{fig:HighlyTwistedPlat}, right.  Let $\mathcal L'$ be the 3-component link $K' \cup L_1' \cup L_2'$.

Let $\hat{R}'$ denote the twice-punctured annulus $R' \cap E(\mathcal L')$.  In~\cite{BakerGordonLuecke2013}, the authors define a \emph{catching surface} for the pair $(\hat{R}',K')$.  In the present work, we use a more specific definition from~\cite{BowmanTaylorZupan} which will suffice for our purposes.

\begin{definition}
An orientable, connected, properly embedded surface $Q \subset E(\mathcal L')$ is a \emph{catching surface} for $(\hat{R},K')$ if
\begin{enumerate}
\item $\chi(Q) < 0$
\item $Q \cap \partial N(L_i')$ is a nonempty collection of coherently oriented essential curves, and
\item curves of $Q \cap \partial N(L_i')$ meet curves of $\hat{R}' \cap \partial N(L_i')$.
\end{enumerate}
\end{definition}

\begin{lemma}\label{lemma:QCatchingSurface}
  The 2-punctured disk $Q'$ bounded by $K'$ in $E(\mathcal L')$ is a catching surface for $(\hat{R}',K')$. Moreover, $\partial Q'$ is meridional on $N(L_1')$ and $N(L_2')$, and meets a meridian of $N(K')$ exactly once.\qed
\end{lemma}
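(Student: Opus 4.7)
The plan is to produce $Q'$ explicitly as a natural spanning disk of $K'$ in $S^3$, then verify each of the three catching-surface conditions together with the two statements in the ``moreover'' clause; the content is mostly bookkeeping against the construction in Figure~\ref{fig:HighlyTwistedPlat}, so no step should be a serious obstacle.

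First I would describe $Q'$ concretely. Since $K'$ is a planar unknot placed in the lower left-hand corner of the diagram, it bounds an embedded disk $D \subset S^3$. By the construction, $D$ meets $R'$ transversely in two arcs, each of which is punctured exactly once by $L_1'$ and once by $L_2'$ respectively (this is precisely what ``linking with each of $L_1'$ and $L_2'$ once'' means together with the requirement that $K'$ meet $R'$ twice). Therefore $Q' = D \cap E(\mathcal L')$ is a disk with two open neighborhoods removed, i.e.\ a twice-punctured disk. In particular $\chi(Q') = 1 - 2 = -1 < 0$, which gives condition (1).

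Next I would check conditions (2) and (3). Because $L_i'$ pierces $D$ exactly once and transversely, the surface $Q'$ meets $\partial N(L_i')$ in a single simple closed curve that bounds a meridional disk of $N(L_i')$; thus $Q' \cap \partial N(L_i')$ is a nonempty collection of essential curves (a single meridian), which is coherently oriented trivially. On the other hand, $\hat{R}'$ is a sub-annulus of the annulus $R'$ having $L_i'$ as a boundary component, so $\hat{R}' \cap \partial N(L_i')$ is a longitudinal curve on $\partial N(L_i')$ parallel to $L_i'$. Since a meridian and a longitude of $N(L_i')$ intersect essentially on the torus $\partial N(L_i')$, condition (3) is immediate, and this also verifies the first half of the ``moreover'' clause.

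Finally, to prove that $\partial Q'$ meets a meridian of $N(K')$ exactly once, I would simply observe that the outermost boundary component of $Q'$ is the curve $\partial D \cap \partial N(K') \subset \partial N(K')$, which bounds the disk $D \cap N(K')$ (a meridional disk of $N(K')$ near $K'$) on one side and represents the Seifert longitude of $K'$ coming from the embedded spanning disk $D$ on the other side. A Seifert longitude meets a meridian of $N(K')$ algebraically (and geometrically) exactly once, completing the proof. The only minor subtlety is making sure that the orientation conventions and the fact that $K'$ lies in the projection plane allow the disk $D$ to be chosen disjoint from $L_1' \cup L_2'$ away from the two declared intersection points, but this is transparent from Figure~\ref{fig:HighlyTwistedPlat}, right.
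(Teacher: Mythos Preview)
Your verification is correct and is exactly what the paper intends: note that the lemma is marked \qed\ with no proof, so the authors regard it as immediate from the construction, and your argument spells out precisely that verification. One small wording slip---``$D$ meets $R'$ transversely in two arcs, each of which is punctured\ldots''---is garbled, but your parenthetical and the subsequent reasoning make clear you mean that $L_1'$ and $L_2'$ each pierce $D$ once, which is all that is needed.
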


We use the following theorem from \cite{BakerGordonLuecke2013}, as stated in \cite{BowmanTaylorZupan}.

\begin{theorem}[\cite{BakerGordonLuecke2013}, Theorem~1.2]\label{thm:BakerGordonLuecke}
Let $\mathcal{L} = K\cup L_1 \cup L_2$ be a link in $S^3$, and let $R$ be an annulus in $M=S^3$ with $\bdy R = L_1 \cup L_2$. Assume $(R,K)$ is caught by a surface $Q$ in $E(\mathcal{L})$ with $\chi(Q)<0$. Suppose that $\bdy Q$ is meridional on $N(L_1)$ and $N(L_2)$ and meets a meridian of $N(K)$ exactly once. Let $K^n$ be $K$ twisted $n$ times along $R$. If $H_1\cup_\Sigma H_2$ is a genus $g$ Heegaard splitting of $S^3$, then either
\begin{enumerate}
\item $R$ can be isotoped to lie in $\Sigma$,
\item there is an essential annulus properly embedded in $E(\mathcal{L})$ with one boundary component in each of $N(L_1)$ and $N(L_2)$, or
\item for each $n$,
  \[
  b_g(K^n) \geq \frac{1}{2}\left( \frac{n}{-36\chi(Q)} - 2g + 1\right).
  \]
\end{enumerate}
\end{theorem}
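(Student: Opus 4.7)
The strategy is to reduce a hypothetical low-bridge splitting of $K^n$ back to the original link $\mathcal{L} = K \cup L_1 \cup L_2$ via the twist homeomorphism, and then use the catching surface $Q$ to detect the $n$ twists and thereby force the bridge count to grow. I would use the (already stated) alternatives (1) and (2) as exits for the degenerate configurations, so in the main case one gets a genuine quantitative lower bound.

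\emph{Step 1 (untwist).} Since $K^n$ is obtained from $K$ by $n$ twists along $R$, there is a homeomorphism $\phi\from E(L_1\cup L_2)\to E(L_1\cup L_2)$ realizing $n$ Dehn twists along $R$, with $\phi(K)=K^n$. Given a genus $g$ bridge splitting $H_1\cup_\Sigma H_2$ of $(S^3,K^n)$ realizing $b=b_g(K^n)$, pull it back through $\phi$ to obtain a (generally non-standard) genus $g$ bridge surface $\Sigma'=\phi^{-1}(\Sigma)$ for $(S^3,K)$. Equivalently, keep $\Sigma$ fixed and replace the annulus $R$ with $R^n=\phi(R)$, an annulus that now winds $n$ times around $R$.

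\emph{Step 2 (put $\Sigma$ and $Q$ in generic position).} Isotope $Q$ to intersect $\Sigma$ transversely and minimize $|Q\cap\Sigma|$. Discard innermost-disk and outermost-bigon intersections in the standard way, so that every component of $Q\cap\Sigma$ is essential on both surfaces. If $\partial R^n$ could then be made disjoint from $\Sigma$, one would land in alternative (1) of the theorem; if essential meridional annuli appear between $N(L_1)$ and $N(L_2)$, one lands in (2). In the remaining case, the hypothesis that $\partial Q$ meets a meridian of $N(K)$ exactly once and is meridional on $L_i$, combined with the fact that $\partial R^n$ wraps $n$ times around $R$, forces many essential arcs of $Q\cap \Sigma$ running between the two punctures of $Q$ on $N(L_1)$ and $N(L_2)$.

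\emph{Step 3 (count).} Now perform an Euler-characteristic / pigeonhole count on the planar graph cut out of $Q$ by $Q\cap\Sigma$. Each complementary region in $\Sigma$ that carries ``parallel-arc'' families either contributes to a handle of $\Sigma$ (bounded by the genus $g$) or to a bridge arc (bounded by $b$). The number of parallelism classes of arcs of $Q\cap \Sigma$ on $Q$ is controlled by $|\chi(Q)|$, and each parallelism class can absorb only a bounded (constant) amount of the twisting of $R^n$ before forcing one of the exceptional conclusions. Assembling the counts yields the desired inequality
\[
b \geq \tfrac{1}{2}\left(\tfrac{n}{-36\chi(Q)}-2g+1\right).
\]

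\textbf{Main obstacle.} The hard step is Step 3: extracting the explicit linear bound and, in particular, the universal constant $36$. This requires a careful graphic / double-sweepout analysis of how the twisting of $R^n$ distributes among the arc-parallelism classes of $Q\cap\Sigma$, together with case-by-case verification that whenever a parallelism class absorbs more than a fixed amount of twisting, one is forced into alternative (1) or (2). Everything else---the untwisting, the general-position cleanup, and the final arithmetic---is routine once this combinatorial core is in place.
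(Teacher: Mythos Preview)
The paper does not prove this theorem at all: it is quoted verbatim from \cite{BakerGordonLuecke2013} (as restated in \cite{BowmanTaylorZupan}) and then immediately applied in Corollary~3.5. There is therefore no ``paper's own proof'' to compare against; the authors treat it as a black box.

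As for your sketch on its own merits: the overall architecture---untwist via the annular Dehn twist, put $Q$ and $\Sigma$ in minimal general position, and run an Euler-characteristic/parallelism count on $Q\cap\Sigma$---is indeed the shape of the Baker--Gordon--Luecke argument. But what you have written is a plan, not a proof, and you say so yourself: the entire content of the theorem lives in Step~3, where the constant $36$ and the trichotomy with alternatives (1) and (2) must emerge from a detailed analysis of how arcs of $Q\cap\Sigma$ sit on $Q$ and on $\Sigma$, how they interact with the bridge arcs and the handles of $\Sigma$, and how the catching condition on $Q$ forces the twist count to be visible. None of that analysis is present. In particular, your Step~2 conflates several distinct degenerations (``if $\partial R^n$ could then be made disjoint from $\Sigma$, one would land in alternative (1)'') without justification, and the claim that ``each parallelism class can absorb only a bounded (constant) amount of the twisting'' is exactly the lemma that needs proving. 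So the proposal is not wrong in spirit, but it is not a proof; it is an outline with the substantive combinatorics deferred.
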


It follows that

\begin{corollary}\label{cor:UnboundedBridge}
If $\mathcal{L}'$ is hyperbolic and $K^n$ is the result of performing $n$ twists on $K'$ along $R'$, then $b_g(K^n) \rightarrow \infty$ as $n \rightarrow \infty$.
\end{corollary}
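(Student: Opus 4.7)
The plan is to apply Theorem~\ref{thm:BakerGordonLuecke} with $\mathcal{L} = \mathcal{L}'$, $R = R'$, and $Q = Q'$; the hypotheses are satisfied directly by Lemma~\ref{lemma:QCatchingSurface}, since $\chi(Q') = -1 < 0$, and $\partial Q'$ is meridional on $N(L_1')$ and $N(L_2')$ while meeting a meridian of $N(K')$ exactly once. The theorem then produces, for each genus $g$ Heegaard surface $\Sigma$ of $S^3$, one of three alternatives, and the strategy is to rule out alternatives (1) and (2), leaving only the bridge number bound in (3).

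Alternative (2) posits an essential annulus in $E(\mathcal{L}')$ with boundary on $N(L_1') \cup N(L_2')$; since $\mathcal{L}'$ is hyperbolic by assumption, $E(\mathcal{L}')$ is anannular, so no such annulus exists. Alternative (1) asserts that $R'$ can be isotoped into $\Sigma$. If this were so, then the core of $R'$, which is isotopic in $S^3$ to $K_g'$, would lie on $\Sigma$, presenting $K_g'$ as a $(g,0)$-knot. A standard construction (push $\Sigma$ slightly off $K_g'$ into one handlebody, stabilizing once if necessary) then produces a Heegaard splitting of $E(K_g')$ of genus at most $g+1$, so $t(K_g') \leq g$. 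This contradicts Lemma~\ref{lemma:PlatsAreNice}(3), which asserts $t(K_g') = g+1$.

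With alternatives (1) and (2) ruled out, alternative (3) must hold, so
\[ b_g(K^n) \geq \frac{1}{2}\left(\frac{n}{-36\chi(Q')} - 2g + 1\right) = \frac{1}{2}\left(\frac{n}{36} - 2g + 1\right) \]
for every $n$. Since $g$ is fixed, the right-hand side tends to infinity with $n$, so $b_g(K^n) \to \infty$, completing the argument. The main obstacle is excluding alternative (1); this reduces to the sharp tunnel number equality $t(K_g') = g+1$ from Lemma~\ref{lemma:PlatsAreNice}(3), which is precisely why the Johnson--Moriah plat construction was set up with the parameters in Definition~\ref{def:PlatProjection}.
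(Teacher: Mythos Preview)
Your proof is correct and follows essentially the same approach as the paper's own argument: apply Theorem~\ref{thm:BakerGordonLuecke} with the catching surface $Q'$, rule out alternative (2) by hyperbolicity, rule out alternative (1) via the tunnel number computation $t(K_g')=g+1$ from Lemma~\ref{lemma:PlatsAreNice}(3), and conclude with the explicit bound using $\chi(Q')=-1$. The only cosmetic difference is that the paper cites Morimoto for the implication ``$R'$ isotopes into $\Sigma$ $\Rightarrow$ $t(K_g')\le g$,'' whereas you sketch that step directly.
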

\begin{proof}
Since the 2-punctured disk $Q'$ is a catching surface for $(\hat{R},K')$, one of the three conclusions of Theorem~\ref{thm:BakerGordonLuecke} holds.  By Lemma~\ref{lemma:PlatsAreNice}, the annulus $R'$ is not isotopic into the genus $g$ surface $\Sigma$; otherwise, the tunnel number of $K_g$ is at most $g$ (see, for example,~\cite{morimoto}). So conclusion (1) does not hold.  By assumption, $\mathcal L'$ is hyperbolic, and thus conclusion (2) does not hold.  Thus, for each $n$
\[ b_g(K^n) \geq \frac{1}{2}\left(\frac{n}{36}-2g+1\right).\qedhere \]
\end{proof}

\section{Proof of the main theorem}\label{sec:proof}

In order to demonstrate that the constructed links $\mathcal{L'}$ satisfy the hypotheses of Corollary~\ref{cor:UnboundedBridge}, we construct a hyperbolic surgery parent $\mathcal{L}$ of the link $\mathcal{L}'$ and show that $E(\mathcal{L'})$ may be obtained by hyperbolic Dehn filling along boundary components of $E(\mathcal{L})$.

To construct the link $\mathcal{L}$, we begin with the same knot template which produced knot $K_g'$ in Section~\ref{sec:LinkDesc}.  Let $K_g$ denote a knot with a twisted plat projection with the same row and column parameters $m$ and $r$ as $K_g'$ but with twisting parameters $a_{i,j}$ such that $a_{1,j} = 1$ and $a_{i,j} = 0$ if $i >1$.  For the $(i,j)$-th twist region of $K_g$, consider the simple closed curve $C_{i,j}$, unknotted in $S^3$, encircling exactly the two strands of the twist region. Such a curve is called a \emph{crossing circle}.  Let $L_g$ denote the union of $K_g$ and the crossing circles $C_{i,j}$, shown in Figure~\ref{fig:AugHighlyTwistedPlat}, left.  The construction of $L_g$ is standard, and $L_g$ is called a \emph{fully augmented link} as in \cite{Purcell:IntroAug}.  It is hyperbolic by work of Adams \cite{Adams:Aug}. Observe that $E(L_g)$ is homeomorphic to the complement of $E(L_g')$, the fully augmented link corresponding to the link $K_g'$.

\begin{figure}
  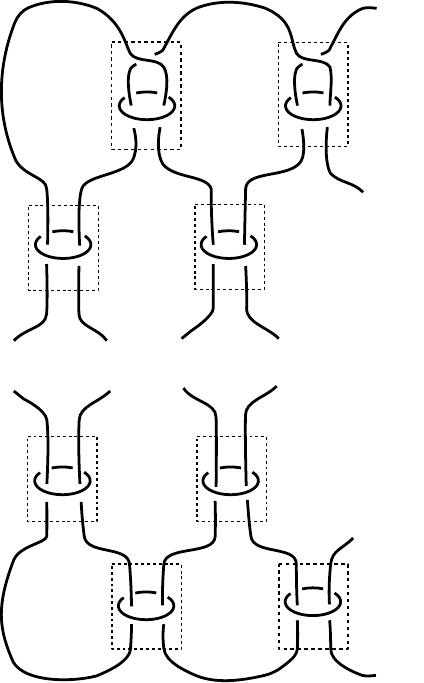
  \hspace{1in}
  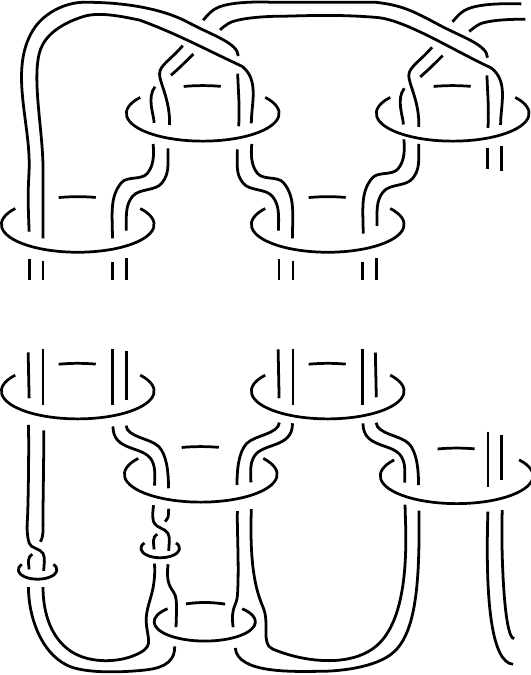
  \caption{Left: The fully augmented link $L_g$. Right: The generalized augmented link $\calL$.}
  \label{fig:AugHighlyTwistedPlat}
\end{figure}

Similar to the construction in Section~\ref{sec:LinkDesc}, we take $L_1$, $L_2$ to be parallel to $K_g$, bounding an annulus $R$ between them which contains $K_g$ as a core.  Let $L_1$ and $L_2$ be the pushoffs of $K_g$ which lie flat in the projection plane following $K_g$, except for two modifications:
\begin{enumerate}
\item In the first row of twist regions, components $L_1$ and $L_2$ form a half twist of four parallel strands, and
\item At the bottom of the first column, $L_1$ and $L_2$ have two positive crossings.
\end{enumerate}
For each of the two positive crossings, introduce crossing circles $C_1$ and $C_2$.  Finally, let $K$ be a simple unknot lying in the lower left corner of the diagram that pierces the annulus $R$ in two points, links each of $L_1$ and $L_2$ once, and such that one intersection of $K$ and $R$ occurs between the two positive crossings specified above.  We define the link $\mathcal{L}$ to be
\[ \mathcal{L} = L_1 \cup L_2 \cup K \cup C_1 \cup C_2  \cup \left(\bigcup C_{i,j} \right).\]
The link $\mathcal{L}$ is shown in Figure~\ref{fig:AugHighlyTwistedPlat}, right. It is a \emph{generalized augmented link}, as in \cite{Purcell:Slope, Purcell:MultiplyTwistedHyp, Purcell:MultiplyTwistedSF}.

\begin{lemma}\label{lemma:Fill}
The link exterior $E(\mathcal{L'})$ can be obtained by Dehn filling $E(\mathcal{L})$.
\end{lemma}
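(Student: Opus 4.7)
The plan is to exhibit explicit Dehn filling slopes on the crossing-circle components of $\mathcal{L}$ whose resulting filled manifold is $E(\mathcal{L}')$. The key observation is the standard fact that if $C$ is a crossing circle encircling a collection of parallel strands and bounding a punctured disk $D$ in $S^3$, then $1/n$-Dehn filling on $C$ (with respect to the meridian--Seifert-longitude basis of $\partial N(C)$) recovers $S^3$ and modifies the link by cutting along $D$ and regluing with $n$ full twists, equivalently by inserting $2n$ half twists into the strands meeting $D$. Since each crossing circle we prescribe a slope on is filled back to a solid torus, no new link components are introduced and only the filled component disappears.

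Next, I would specify slopes case by case. For a crossing circle $C_{i,j}$ with $i>1$, the twist region at position $(i,j)$ in $\mathcal{L}$ contains no twists of the four parallel strands of $L_1 \cup L_2$, while the corresponding region in $\mathcal{L}'$ contains $a_{i,j}$ half twists. Since $a_{i,j}$ is even by Definition~\ref{def:PlatProjection}, I fill $C_{i,j}$ with slope $\pm 2/a_{i,j}$, producing exactly $a_{i,j}$ half twists. For $C_{1,j}$ in the first row, the link $\mathcal{L}$ already contains one half twist of the four parallel strands while $\mathcal{L}'$ carries the odd number $a_{1,j}$ of half twists; thus I fill $C_{1,j}$ with the slope that adds the even quantity $a_{1,j}-1$ half twists. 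Finally, for the two crossing circles $C_1$ and $C_2$ at the bottom of the first column, $\mathcal{L}$ contains one positive crossing inside each, while $\mathcal{L}'$ contains $14$ positive crossings there, partitioned by the intersection with $K'$ into two groups of seven; I fill each of $C_1$ and $C_2$ with the slope that introduces three additional positive full twists, yielding the $7$--$K'$--$7$ configuration.

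Having prescribed these slopes on every crossing-circle component of $\mathcal{L}$, the resulting filled manifold is again $S^3$, and the surviving components $L_1 \cup L_2 \cup K$ now carry precisely the twists that define $L_1' \cup L_2' \cup K' = \mathcal{L}'$. Hence the filled link exterior is homeomorphic to $E(\mathcal{L}')$, proving the lemma.

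The main obstacle is purely bookkeeping: one must fix conventions for the crossing circles of the generalized augmented link $\mathcal{L}$ carefully enough to verify that the chosen slopes introduce twists of the correct sign and that the two crossing circles $C_1, C_2$, together with their ambient positive crossings and the meridional arc of $K$ crossing $R$ between them, really do recover the $7$--$K'$--$7$ pattern at the base of the first column. Once the conventions are fixed, each verification is a direct check at the level of the diagram.
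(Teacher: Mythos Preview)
Your proposal is correct and follows essentially the same approach as the paper: both specify explicit $1/n$-type Dehn filling slopes on each crossing-circle component $C_{i,j}$, $C_1$, $C_2$ to insert the required number of half twists, and observe that $K$ is unaffected. The paper's proof is terser (it simply lists the slopes $1/3$, $1/((a_{1,j}-1)/2)$, and $1/(a_{i,j}/2)$ without your explicit discussion of sign conventions or the $7$--$K'$--$7$ bookkeeping), but the content is the same.
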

\begin{proof}
Corresponding to the crossing circles $C_1$ and $C_2$, perform $1/3$-sloped Dehn filling, which results in adding six new crossings to each existing crossing, so that each twist region contains seven crossings.  Next, corresponding to each crossing circle $C_{1,j}$ (i.e.\ in the first row) perform $1/((a_{1,j}-1)/2)$ Dehn filling.  This creates $a_{1,j}$ half twists in each generalized twist region of the first row.  Finally, corresponding to all other crossing circles $C_{i,j}$, perform $1/(a_{i,j}/2)$ Dehn filling, adding $a_{i,j}$ half twists to the relevant twist region.  By construction, the link component $K$ is unaffected by these Dehn fillings, and the resulting 3-manifold is the exterior of $E(\mathcal{L'})$. 
\end{proof}

\begin{proposition}\label{prop:LHyp}
  The generalized augmented link $\mathcal{L}$ is hyperbolic. 
\end{proposition}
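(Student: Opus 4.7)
The plan is to apply the hyperbolicity criteria for generalized augmented links developed in \cite{Purcell:Slope, Purcell:MultiplyTwistedHyp, Purcell:MultiplyTwistedSF}. Those papers give combinatorial conditions on the base (non-augmented) diagram that imply the corresponding generalized augmented link is hyperbolic, typically via a decomposition of the complement into right-angled ideal polyhedra arising from a circle packing, together with Andreev's theorem or Adams' argument for fully augmented links \cite{Adams:Aug}.

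First I would exhibit $\mathcal{L}$ in the form required by those theorems: the base is $L_1 \cup L_2 \cup K$, and every twist region of the base has been augmented by one of the crossing circles $C_{i,j}$, $C_1$, or $C_2$. Flattening the augmented regions leaves a diagram whose only remaining crossings are those needed for the component $K$ to link each of $L_1$ and $L_2$ once. Thus $\mathcal{L}$ fits into the framework of generalized augmented links used in the cited references, and the hyperbolic structure of $E(\mathcal{L})$ should come from their main theorems once the hypotheses are checked.

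Second I would verify the combinatorial hypotheses: that the base diagram is prime (every simple closed curve meeting it transversely in two points bounds a disk with no crossings), that it is twist-reduced (essentially automatic here, since every twist region of the plat has been pulled into an augmented region), and that each augmenting crossing circle encircles a genuine pair or quadruple of strands of the base. Primality follows from the twisted-plat structure of the underlying projection together with the placement of $K$ in the lower-left corner linking each of $L_1$ and $L_2$ exactly once; the remaining conditions are immediate from the explicit construction. The main obstacle will be verifying primality rigorously, since one must rule out reducing spheres or essential annuli arising from the interaction of the plat layout, the two extra crossings at the bottom of the first column, and the extra component $K$.

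If the cited theorems do not apply to $\mathcal{L}$ verbatim --- for example because the extra component $K$ is not a standard augmenting piece --- the fallback is to carry out Purcell's polyhedral decomposition directly, following \cite{Purcell:IntroAug}: cut $E(\mathcal{L})$ along the projection plane together with the totally geodesic $2$-punctured disks bounded by the crossing circles, producing two combinatorially isomorphic ideal polyhedra whose face pattern admits a circle packing. Andreev's theorem then endows each polyhedron with a right-angled ideal hyperbolic structure, and these glue to give the desired complete hyperbolic metric on $E(\mathcal{L})$.
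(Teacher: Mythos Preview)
Your proposal and the paper diverge substantially, and your plan has a real gap.

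The paper does \emph{not} obtain hyperbolicity by citing a black-box theorem on (generalized) augmented links, nor by producing a right-angled structure via a circle packing and Andreev's theorem. Instead, Section~5 gives a direct Haken-style proof: Lemma~\ref{lemma:Irreducible} shows $E(\mathcal{L})$ is irreducible and boundary irreducible; Lemmas~\ref{lemma:calL'Reflect}--\ref{lemma:TorusForm} set up the polyhedral decomposition and put any essential torus $T$ into normal form (quad pieces, invariant under the reflection); Lemma~\ref{lemma:CompOfT} classifies the pieces of $T\setminus\mathcal{D}$ into three explicit types by analyzing a crushed planar graph $\Gamma$; Proposition~\ref{prop:L'Atoroidal} checks that no coherent gluing of these pieces produces an essential torus; Lemma~\ref{lemma:AnAnnular} rules out essential annuli; and Thurston's theorem finishes. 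The polyhedral decomposition is used \emph{combinatorially}, as a normal-surface scaffold, not geometrically.

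The gap in your plan is the Andreev fallback. In $\mathcal{L}$ each crossing circle $C_{i,j}$ encircles \emph{four} strands of $L_1\cup L_2$ (the paper records that $D_{i,j}$ meets each $L_k$ twice), so $\mathcal{L}$ is a \emph{generalized} augmented link, not a fully augmented one. The polyhedra arising from such links are not in general right-angled, and there is no reason to expect their $1$-skeleta to come from a circle packing; the machinery of \cite{Adams:Aug} and \cite{Purcell:IntroAug} does not apply verbatim. Likewise, the hyperbolicity theorems in \cite{Purcell:Slope, Purcell:MultiplyTwistedHyp, Purcell:MultiplyTwistedSF} that you hope to quote are proved by exactly the kind of hands-on atoroidality argument the paper carries out here; they do not cover this specific configuration (four-strand crossing circles together with the extra linking component $K$) off the shelf, which is why the authors redo the analysis. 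Your instinct to reduce to ``prime and twist-reduced'' is the right flavor, but it is not a substitute for the torus classification in Lemma~\ref{lemma:CompOfT} and Proposition~\ref{prop:L'Atoroidal}, which is where the actual content lies.
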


The proof of Proposition~\ref{prop:LHyp} is long, so we postpone it until the next section. Meanwhile, assuming that result, we continue. 

\begin{corollary}\label{cor:LHyp}
  The link $\mathcal{L}'$ is hyperbolic.
\end{corollary}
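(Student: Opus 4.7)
The plan is to combine Proposition~\ref{prop:LHyp} with Lemma~\ref{lemma:Fill} and apply the 6-Theorem of Agol~\cite{agol:bounds} and Lackenby~\cite{lackenby:word}, mirroring the argument already used at the end of the proof of Lemma~\ref{lemma:PlatsAreNice}. By Proposition~\ref{prop:LHyp} the complement $E(\mathcal{L})$ is hyperbolic, so each component of $\mathcal{L}$ is a cusp, and by Lemma~\ref{lemma:Fill} the manifold $E(\mathcal{L}')$ is obtained from $E(\mathcal{L})$ by Dehn filling the cusps corresponding to the crossing circles $C_1$, $C_2$, and $C_{i,j}$ at the explicit slopes $1/3$, $1/((a_{1,j}-1)/2)$, and $1/(a_{i,j}/2)$ respectively. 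It therefore suffices to show that, in a choice of disjoint horoball neighborhoods of those cusps, each of these slopes has length at least $6$.

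For the original crossing circles $C_{i,j}$ this is precisely the slope length estimate for augmented links established in \cite{FuterPurcell}: the condition $|a_{i,j}|\ge 6$ from Definition~\ref{def:PlatProjection} is exactly what makes $1/(a_{i,j}/2)$ a long enough slope. Since $a_{1,j}$ is required to be odd with $|a_{1,j}|\ge 6$, in fact $|a_{1,j}|\ge 7$, so $|(a_{1,j}-1)/2|\ge 3$ and the first-row slopes are of the same type and enjoy the same length bound. For the augmenting circles $C_1$ and $C_2$, the local diagram near each $C_i$-cusp is identical to that of a standard crossing circle in a fully augmented link, so the cusp cross-section and the slope coordinates agree with the standard model; the $1/3$ slope corresponds to adding six crossings, which is exactly the threshold at which the \cite{FuterPurcell} estimate gives slope length at least $6$. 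The 6-Theorem then implies that $E(\mathcal{L}')$ is hyperbolic.

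The main obstacle is concealed in Proposition~\ref{prop:LHyp}, whose proof the authors postpone to the next section. Granting that, the present corollary is essentially a direct invocation of the 6-Theorem, and the only genuine new check is that the slope-length calculations of \cite{FuterPurcell}, proved for fully augmented links, continue to hold inside the generalized augmented link $\mathcal{L}$: one must confirm that the extra parallel components $L_1$, $L_2$, the component $K$, and the augmenting circles $C_1$, $C_2$ do not perturb the local cusp model used to estimate slope lengths at each $C_{i,j}$ and at each $C_i$. Once that local compatibility is verified, Corollary~\ref{cor:LHyp} follows immediately.
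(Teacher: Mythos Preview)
Your approach is essentially the paper's: combine Proposition~\ref{prop:LHyp} and Lemma~\ref{lemma:Fill}, verify that every filling slope has length at least six, and invoke the 6--Theorem. The paper's proof is exactly this outline.

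The one substantive difference is the reference for the slope-length estimate. You invoke~\cite{FuterPurcell}, which treats fully augmented links, and you correctly flag the resulting gap: the extra components $L_1$, $L_2$, $K$, $C_1$, $C_2$ make $\mathcal{L}$ a \emph{generalized} augmented link, not a fully augmented one, so the cusp geometry results of~\cite{FuterPurcell} do not literally apply. The paper closes this gap not by a local compatibility check but by citing \cite[Proposition~3.1]{Purcell:MultiplyTwistedHyp} (as in the proof of \cite[Theorem~3.2]{Purcell:MultiplyTwistedHyp}), which establishes the requisite slope-length bound directly for generalized augmented links: any Dehn filling that produces a generalized twist region with at least six half twists has slope of length at least six. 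With that reference in hand, the ``genuine new check'' you isolate is already done in the literature, and the corollary is immediate.
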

\begin{proof}
  By Proposition~\ref{prop:LHyp}, the link $\mathcal{L}$ is hyperbolic.  Moreover, each of the Dehn fillings specified in Lemma~\ref{lemma:Fill} yields a generalized twist region containing at least six half twists. As in the proof of  \cite[Theorem~3.2]{Purcell:MultiplyTwistedHyp}, by \cite[Proposition~3.1]{Purcell:MultiplyTwistedHyp}, each slope of the Dehn filling has length at least six, and so the 6--Theorem implies the Dehn filling is hyperbolic \cite{agol:bounds, lackenby:word}. Thus $E(\calL')$ is hyperbolic. 
\end{proof}

We now conclude the proof of Theorem~\ref{thm:BoundedVolUnboundedBridge}.

\begin{proof}[Proof of Theorem~\ref{thm:BoundedVolUnboundedBridge}]
Let $K^n$ be the sequence of knots obtained by twisting $K'$ along $\hat{R}'$, noting that this twisting can be achieved by Dehn filling $E(\mathcal{L'})$ along $\partial N(L_1')$ and $\partial N(L_2')$.  Since only finitely many Dehn fillings yield non-hyperbolic 3-manifolds, there is a threshold $n^*$ such that $K^n$ is hyperbolic whenever $n > n^*$.  Note that since each $K^n$ is obtained by Dehn filling the same parent link, the volume of any $K^n$ is bounded by the volume of the parent link. In particular, $\vol(K^n) \leq \vol(\mathcal{L}') \leq \vol(\mathcal{L})$.

On the other hand, by Corollary~\ref{cor:UnboundedBridge}, we have $b_g(K^n) \rightarrow \infty$ as $n \rightarrow \infty$.
\end{proof}

\section{The generalized augmented link $\calL$ is hyperbolic}

In this section, we complete the proof of Proposition~\ref{prop:LHyp}, that $\calL$ is a hyperbolic generalized augmented link.

\begin{lemma}\label{lemma:Irreducible}
  The manifold $E(\calL)$ is irreducible and boundary irreducible.
\end{lemma}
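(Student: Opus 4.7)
The plan is to construct a polyhedral decomposition of $E(\calL)$ adapted from the standard decomposition for fully augmented links, and then to deduce both properties by a normal surface argument. I would first cut $E(\calL)$ along the projection plane $P$ together with the collection of crossing disks $\calD$, namely the twice-punctured disks bounded by each crossing circle $C_{i,j}$, $C_1$, and $C_2$. This decomposes $E(\calL)$ into two combinatorially isomorphic ideal polyhedra, one for each side of $P$, entirely analogous to the decomposition used for fully and generalized augmented links in \cite{Purcell:IntroAug, Purcell:MultiplyTwistedHyp}. Once this decomposition is in place, it suffices to show that each of its faces is essential in $E(\calL)$: then a standard normal-surface argument applied to any hypothetical essential sphere or compressing disk forces a contradiction against minimal complexity.

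Essentiality of the crossing disks follows from the fact that the underlying plat diagram of $K_g$ is prime and twist-reduced (as recorded in the proof of Lemma~\ref{lemma:PlatsAreNice}), so no $C_{i,j}$ can be compressed or boundary parallel, together with the observation that $C_1$ and $C_2$ each encircle a nontrivial pair of strands of $L_1 \cup L_2$ because the unknot $K$ pierces the annulus $R$ between the two positive crossings. Essentiality of the regions of $P \setminus \calD$ similarly reduces to verifying that no such region yields a splitting of $\calL$, which follows from the explicit linking pattern of $\calL$: $K$ links each of $L_1$ and $L_2$, the crossing circles $C_{i,j}$ link adjacent strands of $K_g$ and its pushoffs, and the knot $K_g$ is nontrivial.

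The main obstacle is the presence of the non-standard components $L_1, L_2, K, C_1, C_2$, which fall outside the templates treated directly in \cite{Purcell:IntroAug, Purcell:MultiplyTwistedHyp}. The careful part of the argument is to verify that augmenting the standard fully augmented link $L_g$ by these additional components still yields a polyhedral decomposition all of whose faces are essential. This amounts to a finite case analysis of possible compressions and boundary parallelisms in the enlarged diagram, using in an essential way the geometric placement of $K$ (piercing $R$ twice, separating the two positive crossings on $L_1 \cup L_2$) and of the crossing circles $C_1, C_2$ described at the end of Section~\ref{sec:proof}.
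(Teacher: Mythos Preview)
Your approach is viable in outline but takes a genuinely different and much heavier route than the paper.  The paper does not build the polyhedral decomposition to prove irreducibility; that machinery is introduced only later (Lemmas~\ref{lemma:calL'Reflect}--\ref{lemma:TorusForm}) to handle essential tori.  For irreducibility and boundary irreducibility the paper instead gives a short elementary argument: the fully augmented link $L_g$ is already known to be hyperbolic, and (after replacing $K_g$ by its pushoff $L_1$) $L_g$ sits inside $\calL$ as a sublink, so $E(\calL)\subset E(L_g)$.  Any sphere $S$ in $E(\calL)$ therefore bounds a ball in $E(L_g)$, so it cannot separate components of $L_g$; swapping $L_1$ and $L_2$ shows $S$ cannot separate $L_2$ either; and the remaining components $C_1,C_2,K$ each have nonzero linking number with $L_1$, forcing them to lie on the same side of $S$.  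Boundary irreducibility is handled the same way.  This avoids entirely the case analysis you flag as ``the main obstacle.''

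If you do pursue your route, two adjustments are needed.  First, the fixed surface of the involution is not the projection plane on the nose: near half-twists it is itself half-twisted (this is Lemma~\ref{lemma:calL'Reflect}), so your ``cut along $P$'' must be formulated accordingly.  Second, the paper's collection $\calD$ includes the twice-punctured disk $D$ bounded by $K$, not just the disks for the $C_{i,j}$, $C_1$, $C_2$; omitting $D$ would leave a polyhedron that is not simply connected, and your normal-surface argument would not go through as stated.  With these fixes your plan is essentially the standard argument for generalized augmented links and should succeed, but it is considerably more work than the paper's two-paragraph proof.
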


\begin{proof}
Let $S$ be an embedded 2-sphere in $E(\calL)$. Recall that the link $L_g$ is hyperbolic, and since $L_1$ is isotopic to $K_g$ away from the crossing circles, the link $L_g$ is isotopic to the union of $L_1$ and the crossing circles.  In addition, $L_g$ may be viewed as sublink of $\calL$ (by replacing $K_g$ with $L_1$), and so $E(\calL)$ embeds in $E(L_g)$; hence $S$ embeds in $E(L_g)$.  As such, $S$ cannot separate any component of $L_g$.  Switching the roles of $L_1$ and $L_2$ in $L_g$, we see that $S$ cannot separate $L_2$ from the components of $L_g$ either.  Further, $C_1$, $C_2$, and $K$ have nonzero linking number with $L_1$, and so these are also on the same side of $S$. It follows that $S$ bounds a ball in $E(\calL)$, so that manifold is irreducible. 

Similarly, by hyperbolicity of the augmented link $L_g$, no component of $L_g \cup L_2$ bounds a disk, and none of $C_1$, $C_2$, and $K$ bound disks by a linking number argument.  This implies $E(\calL)$ is boundary irreducible. 
\end{proof}

\subsection{Ideal polyhedral decomposition}

Next, we show that $E(\calL)$ is atoroidal. This requires results from~\cite{Purcell:Slope,Purcell:MultiplyTwistedSF,Purcell:MultiplyTwistedHyp}. In particular, the link $\calL$ is a generalized fully augmented link, and so it admits some nice properties.

\begin{lemma}\label{lemma:calL'Reflect}
  The link complement $E(\calL)$ admits an orientation reversing involution fixing a surface $P$ pointwise. The annulus $R$ is a component of $P$. 
\end{lemma}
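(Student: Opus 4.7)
The plan is to exhibit the involution as reflection through the projection plane $\Pi$ containing the plat diagram of $\calL$, which is the standard reflection symmetry carried by fully augmented and generalized augmented links, as developed in \cite{Purcell:IntroAug, Purcell:Slope, Purcell:MultiplyTwistedHyp, Purcell:MultiplyTwistedSF}.

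First I would isotope $\calL$ into a position symmetric about $\Pi$. Away from crossings, the components $L_1$ and $L_2$ lie flat in $\Pi$. The crossings within each crossing circle---the half twists of four parallel strands encircled by each $C_{1,j}$ and the two positive crossings at the bottom of the first column encircled by $C_1$ and $C_2$---can be absorbed by a self-homeomorphism of $E(\calL)$ that performs Dehn twists along the corresponding crossing disks; this changes the framings of the crossing circles but not the homeomorphism type of the complement, and after it is applied the strands lying inside each crossing disk become untwisted. Each crossing circle is then placed as a round circle in a plane perpendicular to $\Pi$ and centered on $\Pi$; the untwisted strands inside each crossing disk lie flat on that disk; and the unknot $K$ is placed as a round circle in a plane perpendicular to $\Pi$, meeting $R \subset \Pi$ transversely in two points. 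In this configuration the reflection $\tau$ of $S^3$ across $\Pi$ preserves $\calL$ setwise and reverses orientation, so it descends to an orientation-reversing involution of $E(\calL)$ whose fixed set is the properly embedded surface $P := \Pi \cap E(\calL)$.

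To conclude that $R$ is a connected component of $P$, recall that $L_1$ and $L_2$ are disjoint simple closed curves on the sphere $\Pi$, and that they divide $\Pi$ into two disks and the annulus $R$ between them. In $E(\calL)$, interior punctures of $R$ come only from components of $\calL$ meeting $\Pi$ transversely: by construction $K$ pierces $R$ in exactly two points, while each crossing circle, being a small loop encircling parallel strands of $L_1 \cup L_2$ that lie in $\partial R$, meets $\Pi$ in two points that lie on the far sides of the encircled strands and thus outside $R$. Therefore $R \cap E(\calL)$ is precisely the twice-punctured annulus $\hat R$, whose boundary is entirely contained in $\partial N(L_1) \cup \partial N(L_2) \cup \partial N(K) \subset \partial E(\calL)$, and this is a connected component of $P$.

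The main technical subtlety is the $\tau$-symmetric local placement at the generalized crossing circles $C_{1,j}$, each of which encircles four parallel strands rather than two; the two-strand case for $C_1$ and $C_2$ is the classical augmented-link argument of \cite{Purcell:IntroAug}, while the four-strand case is routine in the generalized-augmented-link framework of \cite{Purcell:MultiplyTwistedHyp, Purcell:MultiplyTwistedSF} and reduces to observing that, once the interior twists are absorbed, the four strands can be laid flat and parallel on the crossing disk.
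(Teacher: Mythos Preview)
Your approach has the right overall shape—the involution is indeed built from reflection through the projection plane, as in the generalized augmented link framework—but there is a genuine gap in the step where you absorb the crossings inside the crossing circles by Dehn twists.

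A Dehn twist along a crossing disk adds or removes a \emph{full} twist (two half twists) to the strands passing through it. At each $C_{1,j}$ the four parallel strands carry a single half twist, and at each of $C_1$, $C_2$ the two strands carry a single crossing; these odd numbers of half twists cannot be reduced to zero by integer Dehn twists. (This parity is exactly why fully augmented links come in two flavors at each crossing circle, with or without a half twist.) Consequently $\calL$ cannot be arranged so that $L_1 \cup L_2$ lies flat in $\Pi$, and your subsequent description of $L_1$ and $L_2$ as disjoint simple closed curves on the sphere $\Pi$ bounding a planar annulus $R$ breaks down.

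The paper does not attempt to remove the half twists. Instead, the involution is reflection in the projection plane followed by a homeomorphism that inserts a full twist at each half-twisted crossing circle; since reflection reverses the sign of each crossing, this composite returns the diagram to itself. Away from half twists it fixes the projection plane pointwise, while in a neighborhood of each half twist the fixed surface $P$ is itself half-twisted rather than flat. The annulus $R$ between $L_1$ and $L_2$ twists in exactly the same places, so it is fixed pointwise and forms a component of $P$. This twisted local model (from \cite[Proposition~3.1]{Purcell:Slope}) is the piece your argument is missing.
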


\begin{proof}
  This follows from Proposition~3.1 of \cite{Purcell:Slope}. Briefly, the involution is given by a reflection through the plane of projection, followed by a homeomorphism that puts a full twist at crossing circles bounded by a half twist, giving back the original diagram. Away from half twists, this fixes the plane of projection pointwise. In a neighborhood of half twists, this fixes a surface $P$ that is also half twisted. The annulus $R$ is fixed pointwise by the involution, hence forms a component of $P$. 
\end{proof}

Let $D_{i,j}$ in $E(\calL)$ denote the disk bounded by the crossing circle $C_{i,j}$. Let $D_1$ and $D_2$ denote disks bounded by $C_1$ and $C_2$, respectively, and let $D$ be the disk bounded by $K$. We let $\mathcal{D}$ be the collection of all these disks. Note we may take these disks to be disjoint, and to meet $L_1$ and $L_2$ in as few points as possible. In particular, $D_{i,j}$ meets each $L_k$ twice, and $D_i$ and $D$ meet each $L_k$ once. 

\begin{lemma}\label{lemma:Polyhedra}
The link complement $E(\calL)$ admits a decomposition into two identical ideal polyhedra. The ideal polyhedra are checkerboard colored, with shaded faces coming from disks in $\mathcal{D}$, and white faces coming from $P$. In each polyhedron, there is a white face for each complementary region of the diagram of $\calL'$, and two shaded faces for each crossing circle. 
\end{lemma}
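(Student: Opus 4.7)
The plan is to adapt the standard checkerboard polyhedral decomposition for fully augmented links, as described in \cite{Purcell:IntroAug}, to the generalized augmented setting of \cite{Purcell:MultiplyTwistedSF, Purcell:MultiplyTwistedHyp}. The surface $P$ from Lemma~\ref{lemma:calL'Reflect} will play the role of the (possibly twisted) projection plane, and the disks of $\mathcal{D}$ will play the role of the crossing disks.

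First, I would cut $E(\calL)$ along $P$. Because $P$ is fixed pointwise by the orientation-reversing involution of Lemma~\ref{lemma:calL'Reflect}, the cut separates $E(\calL)$ into two pieces $E^+$ and $E^-$ which are interchanged by the involution, and hence homeomorphic. Next, I would cut each piece $E^\pm$ along the half-disks $\mathcal{D}\cap E^\pm$. Away from half-twists, each disk of $\mathcal{D}$ is perpendicular to the projection plane and meets it in a single arc, so $P$ cuts each disk in $\mathcal{D}$ into exactly two half-disks; inside the half-twist boxes of $\calL$, the local model from \cite{Purcell:MultiplyTwistedSF} gives the analogous statement with $P$ the half-twisted surface. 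Thus the half-disks are well-defined.

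Second, I would verify that each resulting piece is a 3-ball, and identify its faces. Once all of $P$ and all of $\mathcal{D}$ have been removed, what remains of $E^\pm$ is a regular neighborhood of the lower (respectively upper) half of the diagram of $\calL$, with the link components removed; by the standard flattening argument for fully augmented links (and its twisted version in \cite{Purcell:MultiplyTwistedSF}), each such piece collapses to a 3-ball $\mathcal{P}^\pm$. The white faces of $\mathcal{P}^\pm$ are the components of $P\cap E^\pm$, which correspond bijectively to the complementary regions of the diagram of $\calL'$, since the diagrams of $\calL$ and $\calL'$ differ only in twist parameters and so have the same region structure. The shaded faces are the half-disks: since each disk in $\mathcal{D}$ is cut by $P$ into two halves, and each such half appears as a face on both sides once we cut $E^\pm$ open along it, each crossing circle contributes two shaded faces to $\mathcal{P}^\pm$. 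The ideal vertices come from the intersections of the link components with $\partial\mathcal{P}^\pm$, and the edges from arcs of $\mathcal{D}\cap P$ together with arcs on the link, so that the coloring is genuinely checkerboard. The involution of Lemma~\ref{lemma:calL'Reflect} identifies $\mathcal{P}^+$ with $\mathcal{P}^-$, giving two identical polyhedra.

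The main obstacle is the local bookkeeping inside the half-twist regions, where $P$ is not the flat projection plane and the corresponding crossing circles $C_1, C_2$, and the first row of $C_{1,j}$, interact nontrivially with the twisted portion of $P$. However, this is precisely the local model analyzed in \cite{Purcell:MultiplyTwistedSF}, where it is shown that after the same cuts one still obtains a checkerboard-colored ideal polyhedron with white face structure matching the projection diagram and two shaded faces per crossing circle. Invoking that local analysis completes the argument.
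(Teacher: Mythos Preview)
Your argument is correct and follows essentially the same strategy as the paper: both invoke the generalized augmented link decomposition of \cite{Purcell:Slope, Purcell:MultiplyTwistedSF}, cutting $E(\calL)$ along $P\cup\mathcal{D}$ into two identical checkerboard-colored ideal polyhedra. The one substantive difference is the order of the cuts. You cut along the (twisted) surface $P$ first and then along the resulting half-disks of $\mathcal{D}$, using the involution to identify the two pieces; the paper instead cuts along $\mathcal{D}$ first, observes that this cut \emph{untwists} the half-twists so that $P$ becomes the flat projection plane, and only then reflects. The paper's order makes the half-twist bookkeeping that you flag as the ``main obstacle'' essentially disappear, since after untwisting everything is symmetric under the honest planar reflection; your order is equally valid but leaves that local analysis to the cited references. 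One small correction: the edges of the polyhedra come only from $\mathcal{D}\cap P$, not from ``arcs on the link'' --- the link contributes ideal vertices, not edges.
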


\begin{proof}
This is as in \cite{Purcell:Slope, Purcell:MultiplyTwistedSF}. Shade
each disk in $\mathcal{D}$, and then cut along it, giving two boundary
components homeomorphic to the original disk. Where the disk meets a
half twist, the cut result is homeomorphic to removing the half twist
by untwisting one side only. After cutting and untwisting, the resulting manifold with boundary can be isotoped to be symmetric with respect to reflection in the plane of projection; in fact, untwisting takes $P$ to the plane of projection. Cut along the plane of projection. Obtain two polyhedra, with remnants of link components corresponding to ideal vertices.

Note faces come from disks bounded by crossing circles and the surface $P$, as claimed; each crossing circle gives two shaded faces since we sliced the disk apart. Each region of the diagram gives a white face. Also note that edges lie on the intersection of crossing disks in $\mathcal{D}$ with the projection plane. Each edge meets a white face and a shaded face. Thus the polyhedron is checkerboard colored. 
\end{proof}

Let $P_1$ and $P_2$ denote the identical polyhedra of Lemma~\ref{lemma:Polyhedra}. The form of one of the polyhedra is shown in Figure~\ref{fig:Polyhedron}.

\begin{figure}
  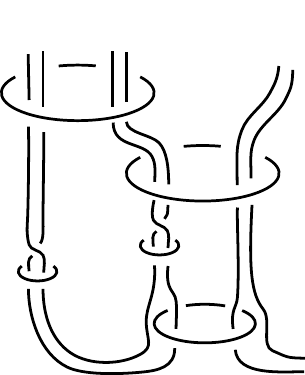
  \hspace{.5in}
  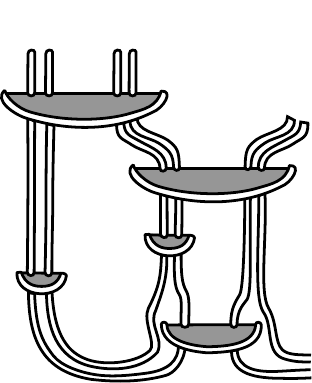
  \hspace{.5in}
  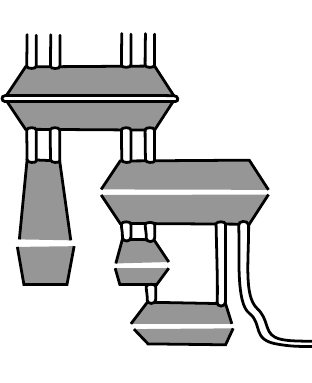
  \caption{The form of a polyhedron coming from $\calL'$. On the right, remnants of the link correspond to ideal vertices.}
  \label{fig:Polyhedron}
\end{figure}

\begin{lemma}\label{lemma:TorusForm}
Suppose $T$ is an essential torus in $E(\calL)$. Then $T$ can be isotoped to have the following properties.
\begin{enumerate}
\item $T$ is preserved under the involution of Lemma~\ref{lemma:calL'Reflect}.
\item $T$ meets each face of $P_1$ and $P_2$ in arcs with endpoints on distinct edges. That is, there are no simple closed curves of intersection of $T$ with faces, and there are no arcs of intersection with both endpoints on the same edge of the polyhedron.
\item The intersection of $T$ with surfaces $\mathcal{D}$ and $P$ forms a graph on $T$ in which all regions are quadrilaterals, with opposite edges on each quadrilateral coming from shaded faces or white faces.
\end{enumerate}
\end{lemma}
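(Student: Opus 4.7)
The plan is a standard normalization argument for the essential torus $T$ with respect to the ideal polyhedral decomposition of Lemma~\ref{lemma:Polyhedra}, combined with the checkerboard coloring to force all complementary regions on $T$ to be quadrilaterals. I will address the three claims in order.

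For (1), let $\tau$ denote the involution of Lemma~\ref{lemma:calL'Reflect}. Among all essential tori isotopic to $T$ I would choose one minimizing $|T \cap \tau(T)|$. Both $T$ and $\tau(T)$ are essential, since $\tau$ is a self-homeomorphism of $E(\calL)$, and a standard innermost-disk/outermost-arc argument using the irreducibility and $\bdy$-irreducibility of $E(\calL)$ (Lemma~\ref{lemma:Irreducible}) together with the incompressibility and $\bdy$-incompressibility of $T$ shows that either $T \cap \tau(T) = \emptyset$ or $T = \tau(T)$. In the disjoint case, $T$ and $\tau(T)$ cobound a product region, which I would use to isotope $T$ onto $\tau(T)$; alternatively, one can push $T$ into the fixed surface $P$.

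For (2), I would then isotope $T$ equivariantly to minimize $|T \cap (\calD \cup P)|$. A simple closed curve of intersection with a face $F$, chosen innermost on $F$, would bound a disk in $F$ and, by incompressibility of $T$, also a disk in $T$; the union is a sphere that bounds a ball by irreducibility, yielding a reducing isotopy. An arc of intersection with both endpoints on a single edge, chosen outermost on $F$, would cobound a disk in $F$ with a subarc of that edge; $\bdy$-incompressibility of $T$ either yields a contradiction or gives a further reducing isotopy. For (3), note that at each edge of the polyhedra exactly four faces meet, alternating shaded-white-shaded-white around the edge. Hence each vertex of the graph $\calG := T \cap (\calD \cup P)$ has degree $4$, with colors alternating around it. Each region of $T \setminus \calG$ is a disk lying in a single polyhedron (a ball), bounded by an alternating sequence of shaded and white arcs, so the number of sides is even. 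A bigon region would provide either a compression, a $\bdy$-compression, or a further reducing isotopy, each contradicting essentiality of $T$ or minimality of $|T \cap (\calD \cup P)|$; thus every face has at least $4$ sides. Letting $V, E, F$ count vertices, edges, and faces of $\calG$, we have $2E = 4V$ and $2E \geq 4F$, so $F \leq V$, and
\[
0 = \chi(T) = V - E + F = F - V \leq 0.
\]
Equality forces $F = V$ and every face to be a quadrilateral, with opposite sides of the same color since sides alternate in color around the boundary.

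The main obstacle will be the equivariance step (1). Since hyperbolicity of $E(\calL)$ is what we are trying to prove, we cannot invoke a geodesic representative of $T$ and must instead rely on equivariant PL surface theory to arrange $\tau$-invariance without sacrificing essentiality. The bigon-elimination step in (3) also requires a careful case analysis within the specific polyhedra of Figure~\ref{fig:Polyhedron}, but it follows the same template as the arc elimination in (2) and should not produce any surprises.
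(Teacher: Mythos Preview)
Items~(2) and~(3) of your plan are essentially the paper's argument: standard innermost/outermost elimination for~(2), and the same Euler-characteristic count for~(3). The paper's bigon elimination is marginally slicker---a bigon on $T$ would force an arc in a white face with both endpoints on the same polyhedral edge, which is already excluded by~(2)---but your version is fine.

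The genuine gap is in~(1). Your ad~hoc argument asserts that if $T \cap \tau(T) = \emptyset$ then $T$ and $\tau(T)$ cobound a product region, but you have not shown that $\tau(T)$ is isotopic, or even homotopic, to $T$: the involution $\tau$ is merely a self-homeomorphism of $E(\calL)$, not one isotopic to the identity, and two disjoint incompressible tori in an irreducible $3$-manifold need not be parallel. (The aside about pushing $T$ into the fixed surface $P$ does not help either, since $P$ is planar and $T$ is a torus.) The paper handles this step differently: it first observes that $T$ must meet $P$---otherwise $T$ meets the disks $\calD$ only in curves bounding disks and can be pushed into a ball inside a polyhedron---and then invokes Holzmann's equivariant torus theorem to replace $T$ by a $\tau$-invariant essential torus. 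That theorem is exactly the ``equivariant PL surface theory'' you gesture toward at the end; you should cite it directly rather than attempt the bare-hands minimization argument.
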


\begin{proof}
This can be found in \cite{Purcell:MultiplyTwistedSF}; we review the proof briefly. An essential torus $T$ must meet faces of $P_j$, else it is contained in a ball and not essential. If $T$ meets only faces of $\mathcal{D}$, then it meets them in simple closed curves bounding disks, which can be pushed off. Hence $T$ meets the surface $P$. Then the equivariant torus theorem~\cite{holzmann} implies that $T$ can be isotoped to be preserved under the involution. This gives item (1).

Item (2) follows by standard arguments involving isotopy of essential surfaces; we leave its proof to the reader.

Finally, item (3) follows by an Euler characteristic argument. Disks $\mathcal{D}$ intersect $P$ in edges; where $T$ meets such an edge we pick up a 4-valent vertex on $T$. Note that the regions on $T$ cannot be triangles, since at a vertex each region meets an edge coming from one shaded and one white face, nor bigons, since a white edge on $T$ coming from a bigon region would contradict item (2).
Let $e$ be the number of edges on $T$, $v$ the number of vertices, and $f$ the number of faces. Since each vertex is 4-valent, $e=2v$. Since each face has at least four edges, $2e\geq 4f$. Then
\[ 0 = v-e+f \geq e/2 - e + e/2 = 0. \]
Thus the inequality must be an equality, so 2e=4f and each face is a quad. 
\end{proof}

\subsection{Essential tori}
We now consider the form of possible essential tori in $E(\calL)$ in order to rule them out.  The strategy is to classify the components of a possible essential torus cut along the disks $\calD$ and then prove that any coherent gluing of these components yields a boundary parallel torus.

\begin{lemma}\label{lemma:CompOfT}
Suppose $T$ is an essential torus in $E(\calL)$. Then each component of $T$ cut along $\calD$ is one of the following:
\begin{enumerate}
\item (Type I) A tube around a strand of $(L_1 \cup L_2) - \calD$,
\item (Type II) A tube around two parallel strands of $(L_1 \cup L_2) - \calD$ connecting disks in $\calD - D$, or
\item (Type III) One of the two exceptional annuli $A_1$ or $A_2$ pictured in Figure~\ref{fig:Annuli}.
\end{enumerate}
\end{lemma}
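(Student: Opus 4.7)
The plan is to exploit the combinatorial constraint from Lemma~\ref{lemma:TorusForm}(3): since $T\cap(\calD\cup P)$ forms a graph on $T$ whose faces are all quadrilaterals with alternating shaded and white edges, the curves $T\cap\calD$ must form a family of pairwise parallel essential simple closed curves on $T$. Cutting $T$ along these curves therefore yields a disjoint union of annuli, each properly embedded in $E(\calL)\ssm\calD$ with both boundary components lying on cut copies of disks in $\calD$. Essentiality of $T$ in $E(\calL)$, together with the fact that $\calD$ is an incompressible collection of disks, forces each such annulus to be essential in $E(\calL)\ssm\calD$, so the problem reduces to classifying essential annuli in the cut-open manifold whose boundary sits on $\calD$.

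Next I would describe $E(\calL)\ssm\calD$ concretely. Each crossing-circle disk $D_{i,j}$ meets each of $L_1, L_2$ in two points, while $D_1, D_2$, and $D$ each meet $L_1$ and $L_2$ in one point. Cutting along $\calD$ therefore produces a collection of ball-like pieces, one per complementary region of the diagram, whose boundaries carry two copies of each adjacent disk together with short subarcs of $L_1\cup L_2$. In any piece $B$ not adjacent to $D$, $B$ is a ball with a small collection of parallel arcs drilled out, and a standard innermost-disk / outermost-arc argument shows that every essential annulus in $B$ with boundary on the cut disks is the frontier of a regular neighborhood of either a single subarc of $L_1\cup L_2$ — giving a Type~I component — or of a pair of parallel subarcs between two disks of $\calD-D$ — giving a Type~II component.

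The main obstacle is the piece containing $D$. There $K=\bdy D$ is an additional link component, and $D$ meets both $L_1$ and $L_2$, so the piece is more intricate than a product ball; moreover, Type~II is by hypothesis forbidden from using $D$. Here I would enumerate the properly embedded essential annuli with one or both boundary components on the cut copies of $D$ by a case analysis on how the boundary curves sit on the punctured disks, using the equivariance under the involution of Lemma~\ref{lemma:calL'Reflect} to reduce symmetric cases. The expected output is that, beyond Type~I tubes around single strands, the only new essential annuli in this piece are the two distinguished annuli $A_1$ and $A_2$ shown in Figure~\ref{fig:Annuli}. Ruling out any further essential annulus near $K$ — in particular, any that might wrap through the two positive crossings at the bottom of the first column or around $D$ in an unexpected way — is the most delicate step of the proof, and will rely on the explicit local picture together with the combinatorial fact that each boundary curve must match up with the quadrilateral structure on $T$.
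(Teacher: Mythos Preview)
Your description of $E(\calL)\ssm\calD$ is incorrect. Cutting $S^3$ along the disjoint disks of $\calD$ yields a single ball (each disk is bounded by an unknot, and cutting along it preserves connectedness), and removing the arcs of $L_1\cup L_2$ from this ball still gives one connected piece, not one per diagram region. You seem to be conflating this with the polyhedral decomposition of Lemma~\ref{lemma:Polyhedra}, which cuts along both $\calD$ \emph{and} the reflection surface $P$; it is the white faces coming from $P$ that separate the diagram regions. Without cutting along $P$, your ``standard innermost-disk / outermost-arc argument'' must classify essential annuli in a single large ball with many arcs drilled out, which is not nearly as local as you claim and does not isolate the Type~III analysis to a single piece near $D$.

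The paper's argument is organized differently and is genuinely combinatorial. Rather than classifying the annuli of $T\ssm\calD$ directly, it uses the full polyhedral decomposition together with Lemma~\ref{lemma:TorusForm}(3) to reduce to classifying individual quads in a polyhedron $P_1$. The key device is a planar graph $\Gamma$ obtained by crushing each shaded face of $\bdy P_1$ to a vertex: a quad then corresponds to a pair of arcs in $\Gamma$ joining the same two vertices, so the question becomes which pairs of faces of $\Gamma$ share two or more vertices. An innermost quad whose associated curve encloses no vertices and only one or two edges gives Type~I or Type~II (or is ruled out as boundary parallel); the Type~III possibilities arise exactly from the exceptional pairs of faces sharing three vertices, which are read off $\Gamma$ by inspection. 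This replaces your open-ended case analysis near $K$ with a finite graph-theoretic check and is what makes the Type~III step tractable rather than ``the most delicate step.''
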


\begin{figure}
  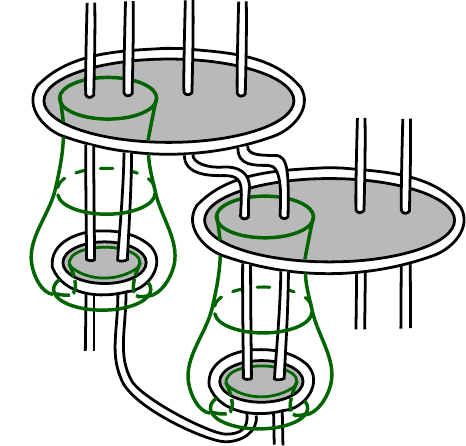
  \caption{The possible Type III components}
  \label{fig:Annuli}
\end{figure}
 
\begin{proof}
Let $P_1$ and $P_2$ denote the polyhedra given by Lemma~\ref{lemma:Polyhedra}.  Following Lemma~\ref{lemma:TorusForm}, the intersection of $T$ with the faces of $P_1$ and $P_2$ is a collection of quadrilaterals; hence we classify quads.  For this task, we construct a planar graph $\Gamma$ from $\partial P_1$ by crushing each shaded face in the checkerboard coloring (faces coming from $\calD$) to a vertex, and connecting two vertices with an edge when the two corresponding shaded faces have an ideal vertex in common. See Figure~\ref{fig:Graph}.  Note that this may give rise to parallel edges.

\begin{figure}
  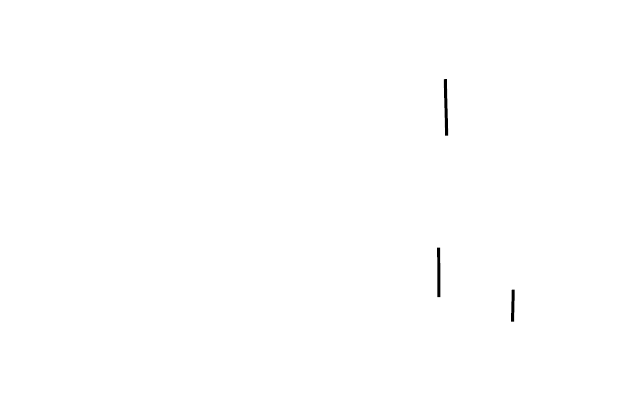
  \hspace{1in}
  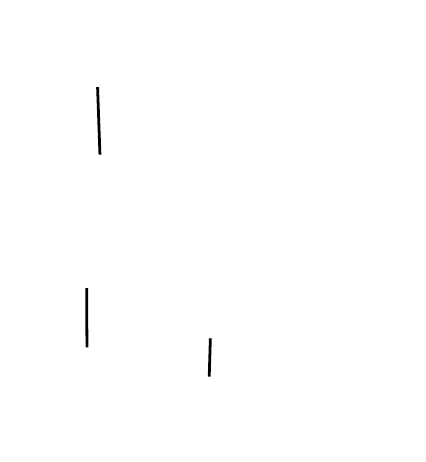
  \caption{Left: Crushing black faces to obtain the graph $\Gamma$. Right: The faces of $\Gamma$ sharing three vertices.}
  \label{fig:Graph}
\end{figure}

If $Q$ is a properly embedded quad component of $T \cap P_1$ in $P_1$, then this process crushes the two edges of $\partial Q$ in black faces, leaving the other two edges $q_1$ and $q_2$ connecting the same pair of vertices in $\Gamma$, and thus $q_1$ and $q_2$ are contained in faces $F_1$ and $F_2$ of $\Gamma$ which share at least two vertices.  On the other hand, observe that two such edges $q_1$ and $q_2$ in $\Gamma$ uniquely determine a quad in $P_1$.

Suppose the quad $Q$ is innermost in $P_1$, and suppose first that the curve $q = q_1 \cup q_2$ encloses a region containing no vertices and one or two edges in its interior. If $q_1$ and $q_2$ connect vertices corresponding to the same disk $D^* \in \calD$, then $q$ encloses a single edge corresponding to the ideal vertex of a crossing circle bounding $D^*$, and since $T$ is preserved by reflection, $T$ contains the union of two rectangles glued along $Q$.  Since $Q$ is innermost and has two boundary components in $D^*$, we see that $T$ is the union of the two rectangles and is parallel to $\partial D^*$, a contradiction. 
  
Now suppose that $q_1$ and $q_2$ connect vertices corresponding to distinct disks in $\calD$.  Then $q$ encloses either one or two edges, and since $T$ is preserved by reflection, the corresponding component of $T$ cut along $\calD$ is a tube around one or two parallel strands of $(L_1 \cup L_2) - \calD$. Note that there are no 2-strand tubes meeting the disk $D$ bounded by $K$; thus in the second case the 2-strand tubes connect distinct disks in $\calD - D$.

Finally, suppose that $q$ encloses a region which contains a vertex in its interior.
Then the faces $F_1$ and $F_2$ share two vertices which are separated by subgraphs containing at least one vertex each. By inspection, the only faces which meet at a pair of vertices of this form are the
two pairs of faces $(F,F')$, and $(F,F'')$ of $\Gamma$ that meet at more than two vertices; each pair has three vertices in common, and $q$ is as pictured in Figure~\ref{fig:Graph}, right.  Once again, $T$ is preserved by reflection, and so a component of $T$ cut along $\calD$ consists of two rectangles glued along $q$.  There are two possibilities for these components: the exceptional annuli $A_1$ and $A_2$ pictured in Figure~\ref{fig:Annuli}.
\end{proof}

\begin{proposition}\label{prop:L'Atoroidal}
  The link $\calL$ is atoroidal.
\end{proposition}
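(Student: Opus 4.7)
My plan is to argue by contradiction: suppose $T \subset E(\calL)$ is an essential torus, and show that any reassembly of the pieces classified in Lemma~\ref{lemma:CompOfT} must produce a boundary-parallel torus, contradicting essentiality. The key preliminary observation is that the three component types are distinguishable by the isotopy type of their boundary curves on each disk of $\calD$: a Type~I tube meets a disk $D^*\in\calD$ in a curve enclosing exactly one puncture of $D^*\cap(L_1\cup L_2)$, a Type~II tube meets it in a curve enclosing exactly two punctures, and Type~III annuli $A_1,A_2$ have boundary on the specific triples of disks determined by Figure~\ref{fig:Annuli}. Consequently, when gluing the components of $T\cap P_j$ back along $\calD$, each boundary circle must match another of the same type enclosing the same set of punctures.

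With this in hand, I split on the types of components appearing in $T$. Suppose first that no Type~III component occurs. If $T$ contains a Type~I tube around some strand $s$ of $L_i$, then its two boundary meridians each force the adjacent piece (on the other side of the relevant disk in $\calD$) to be another Type~I tube around the continuation of $s$. Chasing along $L_i$, these tubes must eventually close up, giving exactly $\partial N(L_i)$—a boundary torus of $E(\calL)$, hence not essential. If instead every component is of Type~II, then each is a tube around two parallel strands with boundary only on $\calD - D$, and matching boundaries forces the union to be a closed surface parallel to two parallel strands of $L_1\cup L_2$ that avoids $D$. But each of $L_1$ and $L_2$ meets $D$ exactly once, so following either strand through the chain of Type~II tubes must eventually encounter $D$, contradicting the fact that no Type~II boundary lies on $D$.

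It remains to handle the case where $T$ contains a Type~III component. Using the involution of Lemma~\ref{lemma:calL'Reflect}, one sees that the appearance of $A_1$ or $A_2$ in one polyhedron is mirrored in the other. I will read off from Figure~\ref{fig:Annuli} exactly which disks of $\calD$ contain the boundary circles of $A_1$ and $A_2$ and which punctures those circles enclose; then, matching these boundary curves against the allowed Type~I, II, and III boundaries, I will show that the only way to extend $A_1$ or $A_2$ to a closed surface without violating the matching constraint is to reproduce a surface parallel to $\partial N(L_1)$, $\partial N(L_2)$, or one of the crossing disk boundaries—again boundary parallel, hence not essential. The main obstacle is this last case: it requires an explicit, bookkeeping-heavy inspection of the local configuration near $C_1$, $C_2$, $K$, and the half-twist at the bottom of the first column, and this is where the particular arithmetic of the construction (two positive crossings augmented by $C_1,C_2$, and $K$ meeting $R$ between them) is used. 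Once all three cases yield a contradiction, $E(\calL)$ is atoroidal, completing the proof.
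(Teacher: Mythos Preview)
Your overall strategy and your handling of the ``no Type~III'' case are essentially what the paper does: a Type~I tube forces its neighbors to be Type~I, so a single Type~I piece makes $T$ boundary-parallel to $\partial N(L_i)$; and a chain of Type~II tubes must eventually run into the disk $D$ bounded by $K$, which Type~II pieces cannot meet. That part is fine.

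The gap is your Type~III case. You do not prove it; you announce a plan for a ``bookkeeping-heavy inspection'' and stop. More importantly, this case is unnecessary, and the paper's organization shows why. The paper first observes that a torus cannot be built from Type~III pieces alone (there are only the two annuli $A_1,A_2$, and their boundary curves do not match up to close into a torus). Hence $T$ must contain a Type~I or a Type~II piece. Then the extension arguments are stated so that they already exclude Type~III: a Type~I boundary curve (enclosing one puncture) can only be matched by another Type~I piece, so one Type~I forces $T$ to be entirely Type~I; and in the Type~II chain, when you reach the disks $D_*,D_{**}$ adjacent to $D$, \emph{no component of any type} --- including Type~III --- extends the chain toward $D$. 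That single clause is what lets the paper avoid your entire second case.

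So your case split (``no Type~III'' versus ``some Type~III'') is less efficient than the paper's (``some Type~I'' versus ``some Type~II''), and it leaves you with an unfinished obligation that the paper's split simply does not incur. If you reorganize along the paper's lines and check at the obstruction disks that the Type~III boundary curves do not match the Type~II curve there, the proof closes in a few lines with no further inspection of $A_1,A_2$ needed.
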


\begin{proof}
Let $T$ be an essential torus in $E(\calL)$.  By Lemma~\ref{lemma:CompOfT}, after isotopy $T - \calD$ consists of Type I, Type II, and Type III components.  Since a torus cannot be built from only Type III components, $T - \calD$ must contain a Type I or a Type II component.

Suppose first that $T - \calD$ contains a Type I component. Observe that
for each Type I component $\hat A$, and for each boundary component $\hat a$ of $\partial \hat A$, there is a unique component which extends $\hat A$ along $\hat a$, and that component is of Type I. It follows that the union of a maximal collection of Type I components of $T - \calD$ is equal to $T$, and $T$ is parallel to either $L_1$ or $L_2$, a contradiction.

On the other hand, suppose that $T - \calD$ contains a Type II component, and let $D_*$ and $D_{**}$ denote the two disks in $D_{i,j}$ which are adjacent to $D$; that is, there is a strand of $L_1$ or $L_2$ connecting these disks to $D$.  Now, for any Type II component $\hat A$ and component $\hat a$ of $\partial \hat A$ not contained in $D_*$, $D_{**}$, $D_1$, or $D_2$ (the disks adjacent to $D)$, there is a unique component $\hat A^*$, of Type II, extending $\hat A$ along $\hat a$, following the annulus $R$. It follows that the union of a maximal collection of Type II components connects $D_1$ to $D_*$ or $D_2$ to $D_{**}$.  Finally, observe that there is no component of any type extending a Type II component from $D_*$ or $D_{**}$ in the direction of $D$, contradicting Lemma~\ref{lemma:CompOfT}.  We conclude that $E(\calL)$ contains no essential tori.
\end{proof}

\subsection{Hyperbolicity}

We now finish the proof of Proposition~\ref{prop:LHyp}. 
The fact that the link contains no essential annuli follows from the previous results.

\begin{lemma}\label{lemma:AnAnnular}
  $E(\calL)$ is an-annular.
\end{lemma}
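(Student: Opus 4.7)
The plan is to show that any essential annulus $A \subset E(\calL)$ forces either a compressing disk (contradicting irreducibility from Lemma~\ref{lemma:Irreducible}) or an essential torus (contradicting Proposition~\ref{prop:L'Atoroidal}). Suppose for contradiction that $A$ is essential; then $\partial A$ consists of two essential simple closed curves on the cusp tori of $E(\calL)$, and I would split into cases by whether these two curves lie on the same cusp or on two distinct cusps.

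In the same-cusp case, let $T_c$ be the cusp torus containing $\partial A$. Since $\partial A$ cobounds $A$, its two components are parallel on $T_c$ and cobound an annulus $A' \subset T_c$. The union $T := A \cup A'$ is an embedded torus in $E(\calL)$, which by Proposition~\ref{prop:L'Atoroidal} must be compressible or boundary parallel. If $T$ compresses via a disk $D$, I would push $D$ to meet $A \cup A'$ in arcs, take an outermost arc on $D$ whose outermost subdisk lies on the $A'$ side, and use boundary-irreducibility from Lemma~\ref{lemma:Irreducible} to rule out such a subdisk; the remaining possibility is a compression of $A$ itself, contradicting essentiality. If $T$ is boundary parallel, then the product region it cobounds with a cusp torus forces $A$ to be boundary parallel as well, again a contradiction.

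In the distinct-cusp case, $\partial A$ lies on two cusp tori $T_c$ and $T_{c'}$. I would apply the polyhedral techniques of Section~5: after isotoping $A$ to be transverse to $\calD$ and equivariant under the involution from Lemma~\ref{lemma:calL'Reflect}, the Euler characteristic argument of Lemma~\ref{lemma:TorusForm} still applies since $\chi(A)=0$, giving a quadrilateral structure to $A - \calD$ away from $\partial A$. The component classification in Lemma~\ref{lemma:CompOfT} then adapts to show that each piece of $A - \calD$ is of Type I, II, or III, or else a piece with boundary arcs on $\partial A \cap \partial E(\calL)$. The coherent-gluing analysis from Proposition~\ref{prop:L'Atoroidal} carries through, and the only combinatorially consistent assembly forces $A$ to be parallel to a single $\partial N(L_i)$, contradicting that $\partial A$ meets two distinct cusps.

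The main obstacle is the distinct-cusp case: the arcs of $\partial A$ on the cusp tori introduce new component types in $A - \calD$ beyond the closed-surface classification of Lemma~\ref{lemma:CompOfT}, so each new local possibility at the link components must be enumerated and eliminated to ensure that no coherent gluing produces an essential annulus.
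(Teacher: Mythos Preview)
Your approach diverges substantially from the paper's, and the distinct-cusp case contains a genuine gap.

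The paper does not touch the polyhedral decomposition at all in this lemma. Instead it exploits the single fact that $\calL$ has many components. For $\partial A$ on two distinct components $J_1,J_2$, the paper takes the frontier $T$ of a regular neighborhood of $A\cup N(J_1)\cup N(J_2)$; atoroidality forces $T$ to be compressible (boundary-parallel is ruled out because $T$ separates two cusps from the rest), and compressing on the outside yields a sphere bounding a ball, so $E(\calL)$ would have exactly two boundary components --- a contradiction. For $\partial A$ on a single component $J$, the frontier of $N(A\cup J)$ consists of two tori $T_1,T_2$; ruling out boundary-parallelism (which would make $A$ boundary parallel or reduce to the first case) and then compressing both on the outside shows $E(\calL)$ is two solid tori glued along $A$ and attached to $\partial N(J)$, hence has a single boundary component --- again impossible. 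This argument is short and uses only irreducibility, boundary-irreducibility, and atoroidality.

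By contrast, your distinct-cusp case is only a plan: you assert that the quad classification of Lemma~\ref{lemma:CompOfT} ``adapts'' and the gluing analysis ``carries through'', but you yourself identify the obstacle --- arcs of $\partial A$ on the cusp tori create new local piece-types that are nowhere enumerated. Nothing in the paper provides that enumeration, so as written this case is unproved. Your same-cusp case is closer in spirit to a standard argument, but the compressibility step is garbled: $D$ is a compressing disk for $T=A\cup A'$, so $\partial D\subset T$ already; there is no sense in which one ``pushes $D$ to meet $A\cup A'$ in arcs'' and takes an outermost subdisk. What you need instead is to argue on which side of $T$ the disk $D$ lies and what surgery along it produces, and even then you must handle the possibility that $T$ is boundary parallel to a cusp other than $T_c$.

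The cleanest fix is to abandon the polyhedral route here and run the neighborhood-of-$A$ argument, using that $\calL$ has at least three components.
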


\begin{proof}
Assume $A$ is an essential annulus in $E(\calL)$. If $A$ has boundary components on two distinct link components, take the boundary of a small embedded neighborhood of $A$ and those link components. This is a torus $T$; by Proposition~\ref{prop:L'Atoroidal}, $T$ is inessential. It cannot be boundary parallel, because it contains two components on one side and the rest of the components on the other. Hence it is compressible. The compressing disk $\Delta$ cannot be on the side of the torus containing $A$, hence it lies on the other side. Surger along $\Delta$. The result is a sphere. By Lemma~\ref{lemma:Irreducible}, it bounds a ball in $E(\calL)$. Since the ball cannot be on the side of the sphere containing $A$, it must be on the other side. But then $E(\calL)$ has only two boundary components. This is a contradiction.

So assume $A$ has boundary components on the same link component $J$. The boundary of a small neighborhood of $A$ and $J$ gives two tori $T_1$ and $T_2$ in $E(\calL)$, neither of which is essential by Proposition~\ref{prop:L'Atoroidal}.  If $T_1$ is boundary parallel, then $T_1$ bounds a solid torus $V_1$ in $S^3$ containing either $J$ or another component $J'$ of $\calL$ at its core.  If $J'$ is a core of $V_1$, then $E(\calL)$ contains an essential annulus connecting $J$ and $J'$, reducing to the first case, a contradiction.  If $V_1$ contains $J$ at its core, then the annulus $A$ is boundary parallel, another contradiction.  Similarly, $T_2$ cannot be boundary parallel.

Thus both $T_1$ and $T_2$ are compressible, with compressing disks $\Delta_1$ and $\Delta_2$.
Note that $T_1$ is built of two annuli: an annulus on $\partial N(J)$ and the annulus $A$. Similarly for $T_2$. The disks $\Delta_1$ and $\Delta_2$ cannot have boundary meeting the annuli in closed curves, since $E(\calL)$ is boundary irreducible and $A$ is essential. Similarly, if $\partial \Delta_1$ or $\partial\Delta_2$ meets one of the annuli in an arc with both endpoints on the same boundary component of the annulus, then we may isotope the intersection away. Thus $\partial\Delta_1$ and $\partial\Delta_2$ meet the annuli 
in essential arcs. 
The disk $\Delta_1$ cannot intersect $J$. It follows that $\Delta_1$ lies on the side of $T_1$ that does not contain $J$, and similarly for $\Delta_2$. Now surger along $\Delta_1$, obtaining a sphere which must bound a ball by irreducibility. That ball cannot contain $J$. Hence $T_1$ bounds a solid torus $V_1$ embedded in $E(\calL)$. Similarly, $T_2$ bounds a solid torus $V_2$. Then $E(\calL)$ is obtained by gluing two solid tori $V_1$ and $V_2$ along a common annulus $A$ on their boundaries, and attaching the result to $\partial N(J)$. Therefore $E(\calL)$ has only one boundary component. This is a contradiction. 
\end{proof}

\begin{proof}[Proof of Proposition~\ref{prop:LHyp}]
By Lemma~\ref{lemma:Irreducible}, $E(\calL)$ is irreducible and boundary irreducible. By Proposition~\ref{prop:L'Atoroidal}, it is atoroidal, and by Lemma~\ref{lemma:AnAnnular} it admits no essential annuli. By work of Thurston~\cite{Thurston}, the link complement is hyperbolic. 
\end{proof}

\section{Fixed $b_g$ and unbounded volume}

\begin{proof}[Proof of Proposition~\ref{thm:UnboundedVolBoundedBridge}]
It is well-known that the collection of hyperbolic 2-bridge knots has unbounded volume, and $b_1(K) =1$ for a hyperbolic 2-bridge knot.  Thus, fix $g$ and $b$ such that $g+b \geq 3$, and for any $n$, let $K_n$ be a knot given by a highly twisted plat projection of $2m$ strands, where $m = g+b$, with $r_n$ rows, where $r_n > 4m(m-2)$ and $r_n \rightarrow \infty$ as $n \rightarrow \infty$.  Select twisting parameters $a_{i,j}$ so that $|a_{i,j}| \geq 7$ and so that $K_n$ is a knot, as in Lemma~\ref{lemma:PlatsAreNice}.

Then each $K_n$ is hyperbolic and the induced bridge sphere is distance at least $2m$.  Using the main result of~\cite{Tomova}, we have $b_g(K_n)=b$.  On the other hand, because there are at least seven crossings in each twist region, \cite[Theorem~1.2]{FKP:DehnFillingVolJP} implies that $\vol(K_n)$ is linearly bounded below by $\tw(K_n)$, the number of twist regions in the highly twisted plat projection of $K_n$.  Clearly, $r_n \rightarrow \infty$ implies that $\tw(K_n) \rightarrow \infty$, and thus $\vol(K_n) \rightarrow \infty$ as $n \rightarrow \infty$ as well.
\end{proof}

\bibliographystyle{amsplain}
\bibliography{references}

\end{document}